\numberwithin{equation}{section}
\patchcmd{\ttlh@hang}{\parindent\z@}{\parindent\z@\leavevmode}{}{}
\patchcmd{\ttlh@hang}{\noindent}{}{}{}
\DeclareMathOperator*{\tr}{tr}
\newtheorem{theorem}{Theorem}[section]
\newtheorem{lemma}[theorem]{Lemma}
\newtheorem{proposition}[theorem]{Proposition}
\newtheorem{corollary}[theorem]{Corollary}
\theoremstyle{definition}
\newenvironment{example}
  {\pushQED{\qed}\examplex}
  {\popQED\endexamplex}
\newtheorem{remark}[theorem]{Remark}
\newcommand{\Ad}{{\rm Ad}}
\newcommand{\ad}{{\rm ad}}
\newcommand{\de}{{\rm d}}
\newcommand{\ee}{{\rm e}}
\newcommand{\End}{{\rm End}\,}
\newcommand{\conv}{{\rm conv}}
\newcommand{\ie}{{\rm i}}
\newcommand{\im}{{\rm Im}\,}
\newcommand{\ind}{{\rm ind}}
\newcommand{\Ker}{{\rm Ker}\,}
\newcommand{\pker}{{\rm pker}\,}
\newcommand{\Prim}{{\rm Prim}\,}
\newcommand{\Ran}{{\rm Ran}\,}
\renewcommand{\Re}{{\rm Re}\,}
\newcommand{\Tr}{{\rm Tr}\,}
\newcommand{\CC}{{\mathbb C}}
\newcommand{\RR}{{\mathbb R}}
\newcommand{\ZZ}{{\mathbb Z}}
\newcommand{\Hc}{\Hpi}
\newcommand{\Oc}{{\mathcal O}}
\newcommand{\Uc}{{\mathcal U}}
\renewcommand{\gg}{{\mathfrak g}}
\newcommand{\hg}{{\mathfrak h}}
\newcommand{\pg}{{\mathfrak p}}
\newcommand{\dual}[2]{\langle #1, #2\rangle}
\newcommand{\scalar}[2]{( #1, #2)}
\newcommand{\Hpi}{\mathcal{H}_{\pi}}
\newcommand{\vol}{\mathrm{vol}}
\def\XXint#1#2#3{{\setbox0=\hbox{$#1{#2#3}{\int}$ }
\vcenter{\hbox{$#2#3$ }}\kern-.6\wd0}}
\title[]{Symplectic projective orbits \\ of unimodular exponential Lie groups}
\author{Ingrid Belti\c t\u a}
\address{Institute of Mathematics "Simion Stoilow" of the Romanian Academy, PO Box 1-764, 014700 Bucharest, Romania}
\email{ingrid.beltita@gmail.com, Ingrid.Beltita@imar.ro}
\author{Jordy Timo van Velthoven}
\address{Faculty of Mathematics,
University of Vienna, 
Oskar-Morgenstern-Platz 1,
1090 Vienna, Austria}
\email{jordy-timo.van-velthoven@univie.ac.at}
\subjclass[2020]{22E25, 22E27, 53D20, 81R30}
\keywords{coherent states, exponential Lie groups, projective kernel, square-integrable representations, symplectic projective orbit}
\begin{document}

\begin{abstract}
For an exponential Lie group $G$ and an irreducible unitary representation $(\pi, \Hpi)$ of $G$, we consider the natural action defined by $\pi$ on the projective space of $\Hc$, and show 
that the stabilisers of this action coincide with the projective kernel of $\pi$. 
Using this, we prove that,  if $G/\pker(\pi)$ is unimodular, then  $\pi$ admits a symplectic projective orbit if and only if $\pi$ is square-integrable modulo its projective kernel  $\pker(\pi)$.
\end{abstract}

\maketitle

\section{Introduction}
Let $(\pi, \Hpi)$ be an irreducible unitary representation of a connected Lie group $G$. 
Then $G$ defines an action on the associated projective Hilbert space $P(\Hpi)$ by $g \cdot [\eta] = [\pi(g) \eta]$, where $[\eta] := \mathbb{C} \eta$ denotes the ray generated by $\eta \in \Hpi \setminus \{0\}$. An orbit of this action,
\begin{align} \label{eq:CSorbit}
G \cdot [\eta] = \{ [\pi(g) \eta] : g \in G \},
\end{align}
is often referred to as a (Perelomov-type) \emph{coherent state orbit}  \cite{perelomov1972coherent, debievre1989coherent, lisiecki1990kaehler}. 

A classical problem in the study of coherent state orbits is to determine the representations $\pi$ and vectors $\eta$ for which the associated  orbit \eqref{eq:CSorbit} admits an additional structure, such as a symplectic stucture \cite{kostant1982symplectic, lisiecki1993symplectic, debievre1989coherent} or even a K\"ahler structure \cite{lisiecki1991classification, neeb1996coherent, lisiecki1990kaehler}. 
One motivation for studying the existence of symplectic and complex coherent state orbits is their use in geometric quantisation \cite{odzijewicz1992coherent, guillemin1984symplectic} and Berezin quantisation \cite{berezin1975general, rawnsley1990quantization}, respectively. In addition, symplectic and complex coherent state orbits are naturally related to convexity properties of moment maps of unitary representations, see, e.g., \cite{arnal1992convexity, neeb1996coherent}.
Although groups and representations admitting complex coherent state orbits are quite well-understood \cite{lisiecki1991classification}, the picture is far less complete for symplectic coherent state orbit, see, e.g., \cite[Conj. 11.1]{lisiecki1993symplectic}.

The aim of the present paper is to study the existence of symplectic coherent state orbits for exponential Lie groups, that is, Lie groups whose exponential map is a diffeomorphism; any such group is solvable. 
Representations admitting such symplectic orbits are generally referred to as \emph{coherent state representations} in the literature \cite{lisiecki1991classification, neeb1996coherent, moscovici1978coherent}, and are often assumed to have a discrete kernel.
As a consequence of more general results, we obtain the following characterisation of coherent state representations of unimodular exponential Lie groups.

\begin{theorem} \label{thm:intro}
    Let $G$ be a unimodular exponential Lie group and let $(\pi, \Hpi)$ be an irreducible representation of $G$ with discrete kernel. The following assertions are equivalent:
    \begin{enumerate}[{\rm(i)}]
        \item There exists a smooth vector $\eta \in \Hpi^{\infty} \setminus \{0\}$ such that $G \cdot [\eta]$ is symplectic.
        \item For every smooth vector $\eta \in \Hpi^{\infty} \setminus \{0\}$, the orbit $G \cdot [\eta]$ is symplectic.
        \item $\pi$ is square-integrable modulo the centre.
    \end{enumerate}
    If one of the above conditions holds true, then $G$ has nontrivial centre. 
\end{theorem}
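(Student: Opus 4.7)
The plan is to deduce Theorem~\ref{thm:intro} from the general equivalence announced in the abstract: under the hypothesis that $G/\pker(\pi)$ is unimodular, $\pi$ admits a symplectic projective orbit if and only if $\pi$ is square-integrable modulo $\pker(\pi)$. The key reduction is the identification of $\pker(\pi)$ with the centre $Z(G)$. The inclusion $Z(G)\subseteq\pker(\pi)$ is immediate from Schur's lemma. For the reverse, let $g\in\pker(\pi)$, so $\pi(g)$ is a scalar operator; then for every $h\in G$ the commutator satisfies $\pi(ghg^{-1}h^{-1})=I_{\Hpi}$, and the continuous map
\[
 \Phi\colon G\to\ker\pi,\qquad h\mapsto ghg^{-1}h^{-1},
\]
takes values in a discrete subgroup and sends $e$ to $e$. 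Connectedness of $G$ forces $\Phi\equiv e$, whence $g\in Z(G)$. That $G/Z(G)$ is itself unimodular is then straightforward: its Lie algebra is $\gg/\zg$, and $\ad X$ annihilates $\zg$, so the trace of $\ad X$ on $\gg$ equals the trace of $\ad\bar X$ on $\gg/\zg$, which vanishes by unimodularity of $G$. The general theorem therefore applies and yields (i)$\Leftrightarrow$(iii).

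The equivalence (i)$\Leftrightarrow$(ii) should follow from the paper's first main result, by which the stabiliser of every ray $[\eta]$ equals $\pker(\pi)=Z(G)$. Thus every orbit $G\cdot[\eta]$ with $\eta\in\Hpi^\infty\setminus\{0\}$ is $G$-equivariantly diffeomorphic to the single homogeneous space $G/Z(G)$, and the pull-back by the coherent state map of the Fubini--Study form differs, on passing from one $\eta$ to another, only by a non-zero constant. Hence the existence of a symplectic smooth-vector orbit forces every smooth-vector orbit to be symplectic; the reverse implication is trivial since $\Hpi^\infty\neq 0$.

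The most delicate point, which I regard as the main obstacle, is that (iii) forces $Z(G)$ to be nontrivial. Suppose for contradiction that $Z(G)=\{e\}$; then (iii) becomes square-integrability of $\pi$ on $G$, and by the orbit method for exponential groups this is equivalent to the Kirillov orbit $\mathcal{O}_\pi\subseteq\gg^*$ being open, i.e.\ to the existence of $\ell\in\gg^*$ for which the skew form $B_\ell(X,Y):=\ell([X,Y])$ is non-degenerate on $\gg$; in other words $\gg$ is Frobenius. I would then invoke the classical cohomological obstruction: in the Chevalley--Eilenberg complex, $\omega:=d\ell$ is a left-invariant symplectic form, and a short computation gives $\omega^n=d(\ell\wedge\omega^{n-1})$, so the top left-invariant form $\omega^n$ is exact; but for a unimodular Lie algebra every non-zero left-invariant top form represents a non-trivial class in $H^n(\gg,\RR)=\RR$, contradicting the exactness of $\omega^n$ unless $\gg=0$. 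Every other step is essentially bookkeeping around the general equivalence and the identification $\pker(\pi)=Z(G)$.
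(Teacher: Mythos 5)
Your overall strategy coincides with the paper's: reduce Theorem~\ref{thm:intro} to the general unimodular equivalence (Theorem~\ref{thm:symplectic}) by identifying the projective kernel with the centre. Two of your steps are correct and in fact cleaner or more self-contained than the paper's. First, your direct proof that $\pker(\pi)=Z(G)$ when $\ker(\pi)$ is discrete (the commutator map $h\mapsto ghg^{-1}h^{-1}$ lands in the discrete group $\ker(\pi)$, hence is constant) is a nice shortcut; the paper instead never identifies the two subgroups and rather chains together citations showing that square-integrability modulo $\pker(\pi)$, modulo $\ker(\pi)$, and modulo $Z(G)$ are all equivalent. With your identification the clause ``square-integrable modulo the centre'' becomes literally ``square-integrable modulo $\pker(\pi)$'', and unimodularity of $G/Z(G)$ follows as you say. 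Second, your argument for the nontriviality of the centre is genuinely different: the paper cites an external result that a unimodular exponential group has no strictly square-integrable representations, whereas you derive the same conclusion from the classical fact that a nonzero unimodular Lie algebra cannot be Frobenius ($\omega^n=d(\ell\wedge\omega^{n-1})$ is exact, yet $H^{\dim\gg}(\gg;\RR)\cong\RR$ for unimodular $\gg$). That argument is correct, provided you note (as you implicitly do via Corollary~\ref{sq-cor}) that triviality of $\pker(\pi)$ together with square-integrability forces $\gg(\ell)=0$, i.e.\ $\Oc_\pi$ open in all of $\gg^*$.

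The genuine gap is your treatment of (i)$\Rightarrow$(ii). You invoke only the abstract's equivalence (existence of a symplectic orbit $\Leftrightarrow$ square-integrability mod $\pker(\pi)$) and then claim that, since all orbits are diffeomorphic to $G/Z(G)$ by Proposition~\ref{prop:stabilizer_kernel}, the pulled-back forms for different $\eta$ ``differ only by a non-zero constant''. This is false. By the computation in Lemma~\ref{lem:symplectic}, the pullback of $\omega^{P(\Hc^\infty)}$ at $[\eta]$ is the bilinear form $(X,Y)\mapsto\langle J_\pi([\eta]),[X,Y]\rangle$ on $\gg$, whose radical is $\gg(J_\pi([\eta]))$; for distinct $\eta$ the functionals $J_\pi([\eta])$ are in general not proportional modulo $[\gg,\gg]^\perp$, and a priori nothing prevents $\gg(J_\pi([\eta]))$ from being strictly larger than $\pg_\pi$ for some $\eta$ even when it equals $\pg_\pi$ for another. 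Ruling this out is precisely the substance of the implication (iii)$\Rightarrow$(ii) in Theorem~\ref{thm:symplectic}: one must show that $\Ran J_\pi$ is contained in the single coadjoint orbit $\Oc_\pi$, which the paper obtains by combining the convexity identity \eqref{moment} with Lemma~\ref{rose} (closedness and affineness of $\Oc_\pi$, itself resting on Rosenberg's and Puk\'anszky's theorems and on the unimodularity hypothesis). So either cite the full three-way equivalence of Theorem~\ref{thm:symplectic} rather than only the abstract's two-way version, or supply that moment-map argument; as written, the step does not stand.
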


We provide an example (cf. Example \ref{ex:counterexample_nonunimodular}) of a nonunimodular group that admits symplectic orbits also for non-square-integrable representations, showing that Theorem \ref{thm:intro} might fail for nonunimodular groups.

For nilpotent Lie groups, Theorem~\ref{thm:intro} was announced and stated as \cite[Thm. 4.1]{lisiecki1993symplectic}; however, to the best of our knowledge, a proof  has not been outlined or published. On the other hand, the class of unimodular exponential Lie groups is larger than the class of nilpotent Lie groups. 
Indeed, for an exponential Lie group $G$ with Lie algebra $\mathfrak{g}$, the unimodular function is given by 
$ \Delta_G(\exp X) = e^{\Tr (\ad_X)}$, $X\in \gg$,
Hence the condition that $G$ is unimodular is equivalent with $\Tr(\ad_X)=0$ for every $X\in \gg$.  
This is the case, for instance, for semidirect products of 
the form $ V \rtimes_{\alpha_D} \RR$, where $V$ is a finite dimensional real vector space, $D\colon V \to  V$ is a linear map with $\Tr(D)=0$ with any purely imaginary eigenvalue, and the action $\alpha_D\colon \RR \to \End(V)$ is given by $\alpha_D(t)=e^{tD}$, $t\in \RR$. Another relevant example is given in Example \ref{ex:unimodular}.

Beyond unimodular groups, the relation between coherent state representations and square-integrable representations is more delicate. Although any exponential Lie group admitting a coherent state representation also admits a square-integrable representation with the same projective kernel, and, conversely, any square-integrable representation is a coherent state representation (see Proposition \ref{prop:non1mod} for both facts), there might exist symplectic orbits also for non-square-integrable representations, cf. Example \ref{ex:counterexample_nonunimodular}.

Lastly, we give an application of our results to Perelomov's completeness problem \cite{perelomov1972coherent}, and show that (in the case of exponential Lie groups) necessary conditions for the completeness of coherent state subsystems can be obtained from criteria for the cyclicity of restrictions of associated projective representations obtained in \cite{bekka2004square, enstad2022density, romero2022density}. 

The paper is organised as follows. Section \ref{sec:irreducible} contains preliminary results on the projective kernel of an irreducible representation and square-integrable representations. Section \ref{sec:symplectic} is devoted to the study of the existence of symplectic coherent state orbits and the square-integrability of the representation, including, among others, a proof of Theorem \ref{thm:intro}. Lastly, in Section \ref{sec:perelomov}, we present an application to Perelomov's completeness problem for coherent state subsystems in the setting of exponential Lie groups.  

\subsection*{Notation}
Lie groups will be denoted with capital letter $G$, $H$, etc, while their respective Lie algebras
are denoted with the corresponding gothic letters $\gg$, $\hg$, etc.
For an irreducible representation $\pi\colon G \to \Uc(\Hpi)$ we denote by the same letter $\pi$ its extension 
to the group $C^*$-algebra $C^*(G)$, and its unitary equivalence class in $\widehat G =\widehat{C^*(G)}$.

For  a complex vector space $\mathcal{H}$, we denote by  $P(\mathcal{H})$ its projective
space, that is, the set of all one-dimensional subspaces of $\mathcal{H}$. 
It can be alternatively described as the set of equivalence classes
for the equivalence relation on $\mathcal{H} \setminus \{0\}$ defined by $\eta_1 \sim \eta_2$ if there is $\lambda \in \CC$
such that $\eta_1=\lambda \eta_2$. 
We denote by $[\eta] := \mathbb{C} \eta$ the equivalence class of $\eta \in \mathcal{H} \setminus \{0\}$. The unit circle is denoted by $\mathbb{T} := \{ z \in \mathbb{C} : |z|=1\}$.

\section{Irreducible representations of exponential groups} \label{sec:irreducible}
Let $G = \exp(\mathfrak{g})$ be an exponential Lie group and let $(\pi, \Hpi)$ be an irreducible unitary representation of $G$. 
By the coadjoint orbit method,
there exists $\ell \in \mathfrak{g}^*$ and a real polarisation $\mathfrak{h}$ of $\ell$ such that $\pi$ is unitarily equivalent to the monomial representation $\pi_{\ell} := \ind_H^G \chi_{\ell}$, where $H = \exp(\mathfrak{h})$ and
\[
 \chi_{\ell} (\exp X) = e^{i \dual{\ell}{X}}, \quad X \in \mathfrak{h}.
\]
The coadjoint orbit $\mathcal{O}_{\ell} := \Ad^*(G) \ell$ associated with the equivalence class $[\pi] \in \widehat{G}$ is often simply denoted by $\mathcal{O}_{\pi}$. 
See  \cite{arnal2020representations, fujiwara2015harmonic} for background on the coadjoint orbit method.

\subsection{Projective kernel and stabiliser}

The projective kernel of $\pi$ is the closed normal subgroup
\[
 \pker(\pi) = \{ x \in G : \pi(x) \in \mathbb{T} \cdot I_{\Hpi} \}.
\]
By \cite[Thm. 2.1]{bekka1990complemented}, the group $\pker(\pi)$ is equal to the intersection $\bigcap_{\ell \in \mathcal{O}_{\pi}} G(\ell)$ of the stabiliser subgroups $G(\ell) = \{ x \in G : \Ad^* (x) \ell = \ell \}$. In particular, this implies that $\pker(\pi)$ is connected. 
We denote  by $\ker(\pi) = \{x \in G : \pi(x) = I_{\Hpi} \}$ the kernel of $\pi$, which is a closed normal subgroup of $G$. 

The following remark will be used repeatedly.

\begin{remark}\label{centre}
\normalfont
If $\pi\colon  G \to \mathcal{U}(\Hpi)$ is an irreducible unitary representation of an exponential Lie group, then the centre of $G/\ker(\pi)$ is compact and equal to $\pker(\pi)/\ker(\pi)$.

Indeed, an element $x \ker(\pi)$ is in the centre $Z(G/\ker(\pi))$ if and only if $xyx^{-1}y^{-1} \in \ker(\pi)$ for every $y\in G$.
This is in turn equivalent with $\pi(x) \pi(y)=\pi(y)\pi(x)$ for every $y\in G$. 
But since 
$\pi$ is irreducible, this is  equivalent with $\pi(x)\in \CC \cdot I_{\Hc}$, that is, $x\in \pker(\pi)$.
The fact that  $\pker(\pi)/\ker(\pi)$ is compact follows from \cite[Thm.~2.1]{bekka1990complemented}.
\end{remark}

The group $G$ acts continuously on the projective space 
$P(\Hc)$ (equipped with the quotient topology)
by $(x, [\eta])\mapsto x\cdot [\eta] = [\pi(x)\eta]$. 
The projective stabiliser of a fixed  $\eta \in \Hpi \setminus \{0\}$, that is, the stabiliser with respect to the above action, is then
\[ G_{[\eta]} = \{ x \in G : \pi(x) \eta \in \mathbb{T} \cdot \eta \}. \]
Generally, $\pker(\pi) \subseteq G_{[\eta]}$. For exponential Lie groups, the reverse inclusion also holds.

\begin{proposition} \label{prop:stabilizer_kernel}
Let $(\pi, \Hpi)$ be an irreducible representation of an exponential Lie group. Then $G_{[\eta]} = \pker(\pi)$ for every $\eta \in \Hpi \setminus \{0\}$.
\end{proposition}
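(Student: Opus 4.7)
The plan is to establish the non-trivial inclusion $G_{[\eta]} \subseteq \pker(\pi)$, the reverse being tautological. Fix $x \in G_{[\eta]}$ with $\pi(x)\eta = c\eta$, $c \in \TT$; the aim is to conclude $\pi(x) \in \TT \cdot I_{\Hpi}$. As a first step I would pass to the faithful quotient $G/\ker(\pi)$: by Remark~\ref{centre}, $\pker(\pi)/\ker(\pi)$ is the centre $Z(G/\ker(\pi))$, so one may assume $\pi$ is faithful and $\pker(\pi) = Z(G)$.

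The main step would be an infinitesimal analysis of the identity component $G_{[\eta]}^{0}$, first reducing to $\eta$ smooth via density arguments. Since $G$ is exponential, $G_{[\eta]}^{0} = \exp(\hg_\eta)$ for a Lie subalgebra $\hg_\eta \subseteq \gg$, and $\pi(\exp tY)\eta \in \TT \eta$ for $Y \in \hg_\eta$ combined with Stone's theorem yields $d\pi(Y)\eta = i\lambda(Y)\eta$ for some $\lambda(Y) \in \RR$. A direct computation identifies $\hg_\eta$ with $\{Y \in \gg : d\pi(Y)\eta \in \CC \eta\}$ and shows this set is closed under brackets via $d\pi([Y_1, Y_2])\eta = [d\pi(Y_1), d\pi(Y_2)]\eta = 0$ whenever each $d\pi(Y_i)\eta$ is proportional to $\eta$. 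The crux would then be to upgrade the eigenvector equation $d\pi(Y)\eta = i\lambda(Y)\eta$ to the global relation $d\pi(Y) = i\lambda(Y) I_{\Hpi}$, placing $Y$ in the Lie algebra of $\pker(\pi)$; I would exploit cyclicity of $\eta$ for the enveloping-algebra action together with the iterated identity $[d\pi(Y), d\pi(Z)] = d\pi([Y, Z])$ to propagate the eigenvector condition, using the solvable structure of $\gg$ along its derived series to close the argument.

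It would then remain to rule out extra connected components of $G_{[\eta]}$. The quotient $Q := G/\pker(\pi)$ inherits exponentiality from $G$ (the condition that $\ad$ has no purely imaginary eigenvalues descends to $\gg/\hg_\eta$, and $Q$ is simply connected by the fibration $\pker(\pi) \to G \to Q$ with simply connected fibre), so $Q$ is diffeomorphic to $\RR^n$ via $\exp_Q$ and contains no non-trivial compact subgroups. The image of $G_{[\eta]}$ in $Q$ is thus a closed discrete subgroup; a non-trivial element would give $x = \exp X$ with $X$ outside the Lie algebra of $\pker(\pi)$ but with $\pi(\exp X)\eta = c\eta$, and I expect to contradict this via a spectral analysis of the one-parameter unitary group $t \mapsto \pi(\exp tX)$, combining the already established infinitesimal result with the fact that the spectral measure of $-id\pi(X)$ restricted to $\eta$ must concentrate at a single point.

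The principal obstacle, as anticipated, is the infinitesimal upgrade from the eigenvector relation $d\pi(Y)\eta = i\lambda(Y)\eta$ to scalarity of $d\pi(Y)$: this step genuinely uses the solvable/exponential structure (it fails for compact or semisimple groups, e.g.\ $SU(2)$ acting on $\CC^2$), and will likely require an inductive argument along a flag of ideals in $\gg$, possibly invoking the monomial realisation $\pi = \ind_H^G \chi_\ell$ from Kirillov theory to carry out the calculation at each stage.
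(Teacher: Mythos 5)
Your proposal is a programme rather than a proof: the step you yourself identify as ``the principal obstacle'' --- upgrading the eigenvector relation $\de\pi(Y)\eta = \ie\lambda(Y)\eta$ to the scalarity $\de\pi(Y)=\ie\lambda(Y)I_{\Hpi}$ --- is exactly where all the work lies, and it is left as an intention (``I would exploit cyclicity \dots to close the argument''). The commutator propagation you sketch only shows that $(\de\pi(Y)-\ie\lambda(Y))$ behaves nilpotently on the cyclic span generated from $\eta$ by the enveloping algebra; it does not by itself force the eigenspace to be $G$-invariant, which is what irreducibility would need to conclude scalarity. Until that inductive argument is actually carried out, the proof is incomplete at its core.

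There are also two problematic reductions. First, passing to the faithful quotient $G/\ker(\pi)$ destroys the standing hypothesis: for exponential $G$ the kernel $\ker(\pi)$ can be a nontrivial discrete subgroup of the connected group $\pker(\pi)$ (already for $G=\RR$ and a nontrivial character), so $G/\ker(\pi)$ need not be simply connected, hence need not be exponential, and your later appeals to exponentiality (e.g.\ $G_{[\eta]}^{0}=\exp(\hg_\eta)$, absence of compact subgroups) would no longer apply. Second, the statement is claimed for \emph{every} $\eta\in\Hpi\setminus\{0\}$, and the reduction to smooth $\eta$ ``via density arguments'' does not work: if $\pi(x)\eta=c\eta$ and $\eta_n\to\eta$ with $\eta_n$ smooth, nothing forces $\pi(x)\eta_n\in\TT\eta_n$, and a closed eigenspace of $\pi(x)$ need not contain a nonzero smooth vector. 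Finally, in the last step the relation $\pi(\exp X)\eta=c\eta$ only forces the spectral measure of $-\ie\,\de\pi(X)$ at $\eta$ to concentrate on a coset $\xi_0+2\pi\ZZ$, not on a single point, so the reduction to the infinitesimal case is not immediate either. For comparison, the paper's proof avoids all infinitesimal analysis: it observes that $|\scalar{\eta}{\pi(\cdot)\eta}|$ descends to a continuous function on $G/\pker(\pi)$ that vanishes at infinity by the Howe--Moore theorem, while being identically $1$ on $G_{[\eta]}/\pker(\pi)$; hence $G_{[\eta]}/\pker(\pi)$ is a compact subgroup of the exponential group $G/\pker(\pi)$ and therefore trivial. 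This works for arbitrary nonzero $\eta$ in a few lines; you may want to look for a soft global argument of this kind rather than pushing the Lie-algebraic induction through.
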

\begin{proof}
It is enough to prove the proposition for  $\eta\in \Hc \setminus \{0\}$ with $\| \eta \| =1$. 
Arguing by contradiction, assume that $\pker(\pi)\subsetneq G_{[\eta]}$, so that $G_{[\eta]}/\pker(\pi)$ 
 is nontrivial. 
Consider the matrix coefficient  $ G\to \CC$, 
$x\mapsto \scalar{\eta}{\pi(x)\eta}$. 
It gives a well-defined, continuous function $\dot{x} \mapsto 
f^\pi_\eta(\dot{x})=\vert \scalar{\eta}{\pi(x)\eta} \vert $ on $G/\pker(\pi)$. 
By \cite[Thm.~7.1 and Prop.~4.1]{howe1979asymptotic}, the function
$f^\pi_\eta$ vanishes at infinity. 
On the other hand, $f^\pi_\eta(\dot{x})=1$ for $x \in G_{[\eta]}/\pker(\pi)$.  
It follows that the closed group $G_{[\eta]}/\pker(\pi)$
is a compact subset, hence a compact subgroup,  of  
  $G/\pker(\pi)$. 
  Since $G$ is exponential and $\pker(\pi)$ is connected,
  the quotient $G/\pker(\pi)$ is an exponential
Lie group (cf. \cite[Cor. 1.8.5]{arnal2020representations}).
Hence, the compact subgroup $G_{[\eta]}/\pker(\pi)$  of $G/\pker(\pi)$  
must be trivial,
which is the required contradiction.
\end{proof}

The following example demonstrates that Proposition \ref{prop:stabilizer_kernel} fails for possibly nonexponential solvable Lie groups.

\begin{example} \label{ex:stabilizer}
Let $G = \mathbb{C} \rtimes \mathbb{R}$ be the connected simply connected solvable Lie group with multiplication
\[
 (z, t) (z', t') = (z + e^{it} z', t + t').
\]
This is the universal cover of the group $E_2$ of Euclidean displacements.
The group $G$ is not exponential, see, e.g., \cite[Ex. 1.7.14]{arnal2020representations}.
The basis $\{X,Y,T\}$ of the Lie algebra $\mathfrak{g}$ of $G$  satisfies
\[
 [T,X] = Y, \quad [T, Y] = -X, \quad [X,Y] = 0.
\]

For each $r>0$, a unitary representation $\pi_r$ of $G$ acting on ${\mathcal H}:=L^2 (\mathbb{R} / 2\pi \mathbb{Z})$ is given by 
\[
 \pi_{r} (z, t) \eta( s + 2 \pi \mathbb{Z}) = e^{i \Re (z e^{-i s} r)} \eta(s - t + 2\pi \mathbb{Z}),
\]
cf. \cite[Ex. 3.3.28]{arnal2020representations}. 
Each such $\pi_r$ is irreducible, cf. \cite[Ex. 3.4.10]{arnal2020representations}.

For $k \in \mathbb{Z}$, consider the nonzero vector  $\eta_k (s) = e^{i k s}$.
Then 
\begin{align*}
 (\pi_r (z,t) \eta_k) (s) = e^{- i k t} e^{i \Re (z e^{-i s} r)} \eta_k(s).
\end{align*}
This shows that $H:= \{0\} \times \mathbb{R}=G_{[\eta_k]}$ for any $k \in \mathbb{Z}$.
On the other hand, note that $(\eta_k + \eta_{-k}) (s)  = 2 \cos (ks)$, and hence 
\[
 \big( \pi_{r} (0,t) (\eta_k + \eta_{-k}) \big) (s) = 2 \cos (k(s-t))
\]
for $s,t \in \mathbb{R}$.
Let $t \in \mathbb{R}$ be such that there exists $\lambda \in \mathbb{T}$ satisfying
\[
 (\pi_r (0,t) (\eta_k+\eta_{-k}))(s) = \lambda (\eta_k+\eta_{-k})(s) \quad \text{for any} \quad s \in \mathbb{R},
\]
that is, 
\[
2 \cos (k(s-t))=  \lambda  2 \cos (ks) \quad \text{for any} \quad s \in \mathbb{R}.
\]
Then necessarily  $\lambda = \pm 1$ and $\cos(k(s-t)) = \pm \cos(ks)$ for any $s \in \mathbb{R}$, which implies that $t \in \frac{\pi}{k}
\mathbb{Z}$. 
It follows therefore that \[
G_{[\eta_k + \eta_{-k}]} \cap H = \{0 \} \times  \frac{\pi}{k}
\mathbb{Z} \subsetneq G_{[f_k]} \cap H, \]
and hence $G_{[\eta_k + \eta_{-k}]}\subsetneq G_{[\eta_k]}$.
That is, $G_{[\eta]}$ is not constant for all $\eta \in {\mathcal H} \setminus \{0\}$. 

Let us compute $\pker(\pi_r)$. For this, let  $\xi \in {\mathcal H}$ be given by $\xi (s) = \cos (s) +\cos (2s)$. 
Then 
$$ (\pi_r(0, t)\xi)(s) = \cos (s-t)+ \cos (2(s-t)) \quad \text{for} \quad t \in \mathbb{R}$$
If we assume that $(0, t) \in G_{[\xi]}$, then there is $\lambda\in \mathbb{T}$ such that 
$$\cos (s-t)+ \cos (2(s-t)) = \lambda \big( \cos (s) + \cos (2s) \big), \qquad s\in [0, 2\pi).$$
Taking $s=0$, and $s=\pi$, we get, respectively, 
\begin{align*}
\cos (t) +  \cos (2t) & = 2\lambda\\
-\cos (t) +  \cos (2t) & = 0.
\end{align*}
This shows that $\cos (t) =\cos (2t) = \lambda$, so that $\lambda =1$ and 
$t \in 2\pi \ZZ$, and hence $G_{[\xi]}\cap H =G_{[\xi]}\cap G_{[\eta_k]}=
\{0\} \times 2\pi \ZZ$.
Since  $\pker{(\pi_r)} \subseteq \bigcap_{\eta \in \Hc\setminus \{0\}} G_{[\eta]}$, it follows that
$ \pker{(\pi_r)} \subseteq  \{0\} \times 2\pi\mathbb{Z}.$
On the other hand, from the definition of $\pi_r$, we see that 
 $\{0\} \times 2\pi\mathbb{Z}\subseteq \ker{(\pi_r)}\subseteq \pker{(\pi_r)} $, and thus
 $$ \pker{(\pi_r)} =\{0\} \times 2\pi\mathbb{Z} =\ker{(\pi_r)}.$$
 Note that in this case  the subgroup $\pker{(\pi_r)}$ is not even connected. 
 \end{example}

\subsection{Square-integrable representations}
For an irreducible representation $(\pi, \Hpi)$ of a connected Lie group $G$, let $H$ be a closed normal subgroup of $G$ that is contained in $\pker(\pi)$.
The representation $\pi$ of $G$ is said to be \emph{square-integrable modulo $H$} if there exists $\eta \in \Hpi \setminus \{0\}$ such that the well-defined continuous function $\dot{x} \mapsto |\scalar{ \eta}{\pi(x) \eta }|$ is square-integrable on $G/H$, with respect to the left Haar measure on $G/H$.
 
In the following proposition we gather the  different characterisations of the irreducible representations of exponential Lie groups that are square-integrable modulo their projective kernel.

\begin{proposition} \label{prop:sq}
Let $\pi$ be an irreducible representation of an exponential Lie group $G$. 
The following assertions are equivalent:
\begin{enumerate}[{\rm(i)}]
\item\label{sq-prop_item1} $\pi$ is square-integrable modulo $\pker(\pi)$.
\item\label{sq-prop_item2} The coadjoint orbit $\Oc_\pi$ is open in its affine hull.
\item\label{sq-prop_item3} $\gg(\ell)$ is an ideal in $\gg$ for some
$\ell \in \Oc_\pi$.
\item\label{sq-prop_item4} $G(\ell)$ is a normal subgroup of $G$ for some  
$\ell \in \Oc_\pi$. 
\end{enumerate}
If one of the above \eqref{sq-prop_item1}--\eqref{sq-prop_item4}
holds, then $G(\ell)$ and $\gg(\ell)$ are independent of $\ell \in \Oc_\pi$.
\end{proposition}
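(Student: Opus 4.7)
The plan is to prove the four conditions equivalent by first establishing the easy pairwise equivalences $(\mathrm{iii})\Leftrightarrow(\mathrm{iv})$ and $(\mathrm{ii})\Leftrightarrow(\mathrm{iii})$, and then close the circle with $(\mathrm{i})\Leftrightarrow(\mathrm{iii})$, which is the deepest equivalence and invokes the Plancherel-theoretic machinery for exponential groups.

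For $(\mathrm{iii})\Leftrightarrow(\mathrm{iv})$: since $G$ is exponential, $G(\ell) = \exp(\gg(\ell))$ is a connected subgroup of $G$ whose Lie algebra is exactly $\gg(\ell)$. A connected subgroup of a connected Lie group is normal if and only if its Lie algebra is an ideal, which gives the equivalence directly.

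For $(\mathrm{ii})\Leftrightarrow(\mathrm{iii})$: I would use the identity $\gg(\Ad^*(g)\ell) = \Ad(g)\gg(\ell)$ (a direct computation from the definition of the coadjoint action) together with the standard fact that the tangent space to the orbit at $\ell$ equals $\{\ad^*(X)\ell : X \in \gg\} = \gg(\ell)^\perp$, the annihilator of $\gg(\ell)$ in $\gg^*$. If $\gg(\ell)$ is an ideal, then $\gg(\ell') = \gg(\ell)$ for every $\ell' \in \Oc_\pi$, so the orbit has constant tangent space $\gg(\ell)^\perp$ along itself and must therefore be open in the affine subspace $\ell + \gg(\ell)^\perp$, which is its affine hull. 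Conversely, if the orbit is open in its affine hull, the tangent spaces $\gg(\ell')^\perp$ agree at every point of the orbit, hence $\gg(\ell') = \gg(\ell)$ for all $\ell' \in \Oc_\pi$; since $G$ is connected, this $\Ad(G)$-invariance of $\gg(\ell)$ amounts to being an ideal.

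The main obstacle is $(\mathrm{i})\Leftrightarrow(\mathrm{iii})$, which I expect to require invoking the extension of the Duflo--Moore/Moore--Wolf square-integrability theorem to exponential Lie groups (as found in the monographs on the orbit method cited in the paper, e.g., \cite{arnal2020representations, fujiwara2015harmonic}). The content is that $\pi = \pi_\ell$ is square-integrable modulo $\pker(\pi) = \bigcap_{\ell' \in \Oc_\pi} G(\ell')$ precisely when the orbit is ``flat'' in the sense of being open in its affine hull --- geometrically, this is exactly the condition that $G(\ell)$ is independent of $\ell \in \Oc_\pi$ and coincides with $\pker(\pi)$, so that the Pfaffian of the restriction of $\ell$ to transversal directions is a nonvanishing polynomial and the Plancherel formula on $G/\pker(\pi)$ yields the orthogonality relations. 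I would apply this characterization directly rather than reprove it.

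Finally, the independence assertion follows at once from $(\mathrm{iii})$: if $\gg(\ell)$ is an ideal, then $\gg(\Ad^*(g)\ell) = \Ad(g)\gg(\ell) = \gg(\ell)$ for all $g\in G$, so $\gg(\ell')$ is constant on $\Oc_\pi$; exponentiating, and using that each $G(\ell')$ is connected, we conclude that $G(\ell')$ is likewise independent of $\ell' \in \Oc_\pi$.
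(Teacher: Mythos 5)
Your proposal is correct and follows the paper's decomposition into the same three pairwise equivalences, with the deepest step handled the same way, namely by citing the square-integrability literature rather than reproving it. The one genuine difference is the equivalence \eqref{sq-prop_item2} $\Leftrightarrow$ \eqref{sq-prop_item3}: the paper simply cites \cite[Lem.~2.1.2]{moscovici1978coherent} (or \cite[Prop.~3.1]{baklouti2023open}), whereas you give a self-contained tangent-space argument using $T_{\ell'}\Oc_\pi=\gg(\ell')^\perp$ together with the equivariance $\gg(\Ad^*(g)\ell)=\Ad(g)\gg(\ell)$ and a connectedness argument; this is sound and makes that step independent of the references, at the modest cost of having to justify that the orbit is an immersed submanifold with the stated tangent spaces. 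One point where the paper is more careful than you are: the cited square-integrability theorems (Moscovici--Verona, and the Duflo--Moore/Moore--Wolf circle) are stated for square-integrability modulo the centre or modulo $\ker(\pi)$, not directly modulo $\pker(\pi)$, so the paper first invokes \cite[Cor.~2.1]{bekka1990complemented} to pass between square-integrability modulo $\pker(\pi)$ and modulo $\ker(\pi)$, and uses Remark~\ref{centre} to identify $\pker(\pi)/\ker(\pi)$ with the centre of $G/\ker(\pi)$; you would need to insert this short reduction before ``applying the characterization directly''. The treatment of \eqref{sq-prop_item3} $\Leftrightarrow$ \eqref{sq-prop_item4} and of the final independence assertion coincides with the paper's.
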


\begin{proof}
Let $\ker(\pi) \subseteq G$ be the kernel of the representation $\pi \colon G \to \Uc(\Hc)$. 
Then,  by \cite[Cor.~2.1]{bekka1990complemented},  $\pi$ is square-integrable modulo $\pker(\pi)$ if and only if 
it is square-integrable modulo $\ker(\pi)$.
Thus \eqref{sq-prop_item1} $\iff$ \eqref{sq-prop_item2} 
 follows directly from  \cite[Thm.~2.4.1]{moscovici1978coherent} and Remark~\ref{centre}.
The equivalence \eqref{sq-prop_item2} $\iff$ \eqref{sq-prop_item3} follows from \cite[Lem.~2.1.2]{moscovici1978coherent}, or 
\cite[Prop.~3.1]{baklouti2023open}, 
while \eqref{sq-prop_item3} $\iff$ \eqref{sq-prop_item4} is a consequence of the fact that $G$ is exponential, hence $G(\ell)=\exp (\gg(\ell))$. 
The equality $\gg(\Ad^*(\exp X)\ell)= \Ad (\exp(-X)) \gg(\ell)$ implies the last assertion of the statement. 
\end{proof}

By \cite[Lemma~5.3.7]{fujiwara2015harmonic}, the projective kernel $\pker(\pi)$ of $\pi$ is the largest closed
normal subgroup of $G(\ell)$. 
This immediately yields the following consequence.

\begin{corollary}\label{sq-cor}
The representation $\pi$ is square-integrable modulo $\pker(\pi)$ if and only if $\pker(\pi) = G(\ell)$  for some (and then all) $\ell\in \Oc_\pi$.
\end{corollary}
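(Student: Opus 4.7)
The plan is to combine the cited lemma of Fujiwara--Ludwig with the equivalences already established in Proposition \ref{prop:sq}. Recall that, by \cite[Thm.~2.1]{bekka1990complemented}, we always have the inclusion
\[
\pker(\pi)\subseteq \bigcap_{\ell'\in\Oc_\pi}G(\ell'),
\]
and that $\pker(\pi)$ is a closed normal subgroup of $G$. The content of the corollary is that the (a priori strict) inclusion $\pker(\pi)\subseteq G(\ell)$ is actually an equality precisely when the orbit $\Oc_\pi$ is open in its affine hull.

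For the forward implication, I would argue as follows. Assume $\pi$ is square-integrable modulo $\pker(\pi)$ and fix any $\ell\in\Oc_\pi$. Proposition \ref{prop:sq}\,(iv) then says that $G(\ell)$ is a normal subgroup of $G$. By \cite[Lemma~5.3.7]{fujiwara2015harmonic}, $\pker(\pi)$ is the largest closed subgroup of $G(\ell)$ which is normal in $G$; since $G(\ell)$ itself is such a subgroup, we conclude $G(\ell)\subseteq\pker(\pi)$. Combined with the always-valid reverse inclusion, this yields $\pker(\pi)=G(\ell)$.

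For the converse, suppose $\pker(\pi)=G(\ell)$ for some $\ell\in\Oc_\pi$. Since $\pker(\pi)$ is normal in $G$, so is $G(\ell)$, and Proposition \ref{prop:sq}\,(iv)$\Rightarrow$(i) gives that $\pi$ is square-integrable modulo $\pker(\pi)$. Finally, the parenthetical statement ``for some (and then all)'' follows immediately from the last assertion of Proposition \ref{prop:sq}, which guarantees that under square-integrability the stabiliser $G(\ell')$ does not depend on $\ell'\in\Oc_\pi$.

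There is no real obstacle here; the whole corollary is essentially a bookkeeping consequence of Proposition \ref{prop:sq} together with the Fujiwara--Ludwig lemma. The only point requiring a moment of care is the interpretation of ``largest closed normal subgroup of $G(\ell)$'' in the cited lemma: for the argument to be nontrivial, this must mean the largest closed subgroup of $G(\ell)$ which is normal \emph{in $G$}, and this is indeed the intended reading (otherwise $G(\ell)$ itself would always equal $\pker(\pi)$).
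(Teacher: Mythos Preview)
Your proof is correct and follows exactly the route the paper intends: the text immediately preceding the corollary invokes \cite[Lemma~5.3.7]{fujiwara2015harmonic} and declares the statement an immediate consequence, without spelling out the two implications. Your write-up simply makes explicit what the paper leaves implicit, including the correct reading of ``largest closed normal subgroup of $G(\ell)$'' as the largest closed subgroup of $G(\ell)$ that is normal in $G$.
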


The following lemma provides a sufficient condition on a representation and a group under which the associated coadjoint orbit is closed and affine. 
This result plays an essential role in proving our main theorem. 

\begin{lemma}\label{rose}
Let $G$ be a exponential Lie group with its Lie algebra $\gg$, and let $\pi\colon G \to \Uc(\Hc)$ be
an irreducible unitary representation of $G$ such that $G/\pker(\pi)$ is unimodular.
Assume that $\pi$ is square-integrable modulo $\pker(\pi)$.
Then the corresponding coadjoint orbit $\Oc_\pi$ is closed and affine, namely 
$$ \Oc_\pi=\ell + \gg(\ell)^\perp$$
for every $\ell \in \Oc_\pi$.
\end{lemma}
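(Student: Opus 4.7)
The plan is to establish $\Oc_\pi = \ell + \gg(\ell)^\perp$ by verifying separately that $\Oc_\pi$ is contained in, open in, and closed in the affine subspace $A:=\ell+\gg(\ell)^\perp$, and then invoking connectedness of $A$ to conclude that an open and closed nonempty subset must be all of $A$.

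By Proposition~\ref{prop:sq} and Corollary~\ref{sq-cor}, the hypotheses give $\pker(\pi)=G(\ell)$ and that $\gg(\ell)$ is an ideal of $\gg$. For the inclusion $\Oc_\pi\subseteq A$, I would use the ideal property to argue inductively that $\ad(X)^n Y \in \gg(\ell)$ for every $X\in\gg$, $Y\in\gg(\ell)$ and $n\geq 1$, while the definition of $\gg(\ell)$ as $\{Z:\ell([Z,\gg])=0\}$ forces $\ell(\ad(X)^n Y)=0$ for such $n$. Expanding $\Ad^*(\exp X)\ell=\ell\circ e^{-\ad X}$ into its power series and evaluating at $Y\in\gg(\ell)$ then collapses to $\ell(Y)$, so $\Ad^*(\exp X)\ell-\ell\in\gg(\ell)^\perp$; connectedness of $G$ extends this to all of $G$. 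Openness then follows at once from a dimension count, since $\dim\Oc_\pi=\dim(G/G(\ell))=\dim\gg(\ell)^\perp=\dim A$.

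The main obstacle is closedness of $\Oc_\pi$ in $A$, and this is where the unimodularity of $G/\pker(\pi)$ enters. Writing $V:=\gg(\ell)^\perp$ and $\bar G:=G/G(\ell)$, the ideal property yields $\Ad^*(g)V=V$, and moreover $\Ad^*(h)|_V=\id_V$ for every $h\in G(\ell)$ (since $\Ad(\exp Z)Y-Y\in\gg(\ell)$ for $Z\in\gg(\ell)$ and $Y\in\gg$, and any $v\in V$ annihilates this difference). Hence $\bar G$ acts linearly on $V$, and under the natural identification $V\cong(\gg/\gg(\ell))^*$ this action is precisely the coadjoint representation of $\bar G$. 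Unimodularity of $\bar G$ therefore forces $|\det\Ad^*_{\bar G}(\bar g)|=1$ for all $\bar g\in\bar G$, so the resulting affine $G$-action on $A$ preserves Lebesgue measure.

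To extract closedness, I would compare two $G$-invariant measures on the open, transitive $G$-space $\Oc_\pi$: the restriction of the $G$-invariant Lebesgue measure from $A$, and the pushforward of Haar measure of $\bar G$ under the $G$-equivariant diffeomorphism $\bar G\to\Oc_\pi$. By uniqueness of invariant measure on a transitive $G$-space, these agree up to a positive scalar. Translated through the diffeomorphism, this says that the orbit map sends Haar measure to (a multiple of) Lebesgue, so in particular preimages of relatively compact subsets of $V$ are relatively compact in $\bar G$. Properness of the orbit map in turn implies that its image $\Oc_\pi$ is closed in $A$, which together with connectedness of $A$ yields $\Oc_\pi=A=\ell+\gg(\ell)^\perp$. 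The passage from measure-preservation to properness is the delicate point, and will require a careful use of the exponential structure of $\bar G$ (which realises $\bar G$ as a Euclidean space via $\exp$) to rule out escape of $\bar g_n$ to infinity when $\bar\phi(\bar g_n)$ stays bounded.
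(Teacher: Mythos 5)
Your treatment of the inclusion $\Oc_\pi\subseteq\ell+\gg(\ell)^\perp$ and of openness is fine (both are exactly what the paper extracts from Proposition~\ref{prop:sq}), as are your observations that $\gg(\ell)$ is an ideal, that $G(\ell)=\pker(\pi)$ acts trivially on $V=\gg(\ell)^\perp$, and that the induced action of $\bar G=G/\pker(\pi)$ on $V$ is its coadjoint action and preserves Lebesgue measure. The problem is the closedness step, which is the entire content of the lemma. The inference ``the orbit map sends Haar measure to a multiple of Lebesgue, so preimages of relatively compact sets are relatively compact'' is invalid: a subset of $\bar G\cong\RR^n$ of finite Haar measure need not be relatively compact. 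More seriously, the premises you have established at that point provably do not suffice, so no refinement of the measure argument alone can close the gap. Indeed, let $H=\{h_{a,b}: a>0,\ b\in\RR\}$ act on $\RR^2$ by $h_{a,b}(u,v)=(au+bv,\,a^{-1}v)$. Here $H$ is an exponential Lie group (the $ax+b$ group) acting affinely by determinant-one, hence Lebesgue-preserving, maps; the orbit of $(0,1)$ is the open upper half-plane, the action on it is simply transitive, and the orbit map $h_{a,b}\mapsto(b,a^{-1})$ pushes the left Haar measure $a^{-2}\,da\,db$ exactly onto Lebesgue measure of the orbit --- yet the orbit is not closed, and $h_{a,0}\to\infty$ while its image $(0,a^{-1})$ stays bounded. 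So ``affine, measure-preserving, simply transitive on an open set, left Haar pushed to Lebesgue, acting group exponential'' does not imply properness or closedness; the unimodularity of $\bar G$ itself (not merely the unimodularity of its image in the affine group, which is all your sketch actually uses) must enter in an essentially different way, and you do not indicate how.

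The paper's proof of closedness is representation-theoretic rather than measure-theoretic. It passes to $G'=G/\ker(\pi)$, shows $G'$ is unimodular (its centre $\pker(\pi)/\ker(\pi)$ is compact and the quotient by it is $G/\pker(\pi)$, unimodular by hypothesis), notes that $\pi$ descends to a strictly square-integrable irreducible representation of $G'$, invokes Rosenberg's theorem to conclude that $\pi$ is a closed point of $\widehat{G}$, i.e.\ CCR, and then applies Puk\'anszky's theorem, which for exponential groups identifies CCR representations with those having closed coadjoint orbits. Any salvage of your approach would need a substitute for this chain; that is where the real work of the lemma lies.
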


\begin{proof}
Let $\ker(\pi) \subseteq G$ be the kernel of the representation $\pi\colon G \to \Uc(\Hc)$, and denote by
$G':=G/\ker(\pi)$ the associated quotient group. 
Then $G'$ is a solvable Lie group, connected since the map $p\colon G\to G'$ is continuous and $G$ is connected. 
The centre of $G'$ is $K= \pker(\pi)/\ker(\pi)$, which is compact (cf. Remark \ref{centre}), and hence unimodular. 
On the other hand, $G'/K = G/\pker(\pi)$ is also unimodular, hence so is $G'$ itself, see, e.g., \cite[Rem.~6]{aniello2006square}.

Since $\pi$ is square-integrable modulo $\ker(\pi)$ by \cite[Cor. 1]{bekka1990complemented}, it follows that the irreducible representation $\pi'=\pi\circ p $ is square-integrable (in the strict sense) on $G'$.
Since $G'$ is unimodular, by \cite[Cor. 3.9]{rosenberg1978square}, the kernel $\Ker(\pi')$ of the representation $\pi'$ considered on 
$C^*(G')$
is a closed point in the primitive spectrum $\Prim C^*(G')$ of $C^* (G')$, and hence it is a closed point in
$\widehat{G'}$, since $G'$ is of type I.
The map $\hat{p}\colon  \widehat{G'}\to \widehat{G}$, $\hat{p}(\rho)=  \rho\circ p$  is a 
homeomorphism between $\widehat{G'} $ and a closed subspace of $\widehat{G}$, see, e.g., 
\cite[Prop. 1.C.11 (3) \& Rem. 1.C.12 (2)]{bekka2020delaharpe}. 
Then the singleton $\{\pi\} =\hat {p}^{-1}(\{\pi'\})$ is closed in $\widehat{G}$, and thus $\pi$ is a CCR representation. 
Hence, by \cite[Thm.~1]{pukanszky68}, the coadjoint orbit $\Oc_\pi$ is  closed. 

On the other hand, $\Oc_\pi$ is an open subset of the affine space $\ell +\gg(\ell)^\perp$ for every $\ell \in \Oc_\pi$, since $\pi$ is square-integrable modulo $\pker \pi$, cf. Proposition \ref{prop:sq}. Therefore, $\Oc_\pi=\ell +\gg(\ell)^\perp$, as desired. 
\end{proof}

The following example provides an example of a unimodular group $G$ for which Lemma~\ref{rose} might fail without the unimodularity assumption on $G/\pker(\pi)$.

\begin{example} \label{ex:unimodular}
    Let $G$ be the connected, simply connected completely solvable Lie group with Lie algebra $\gg = \mathrm{span} \{X_1, X_2, X_3, X_4, X_5\}$ satisfying the nontrivial bracket relations
$$
[X_2, X_3] =X_1,\;  [X_2, X_5]=X_2, \;  [X_3, X_5] = -X_3,\;  [X_4, X_5] = X_1.
 $$
 Denote by $X_j^*$, $j=1, \dots, 5$, the dual basis in $\gg^*$.
For $\ell= X_3^*$, a direct calculation gives  $\gg(\ell)=\text{span}\{X_1, X_2, X_4\}$, which is an ideal in $\mathfrak{g}$. 
Therefore, the associated irreducible representation $\pi_\ell$ is square-integrable modulo its projective kernel $\pker(\pi_{\ell})$ by Proposition \ref{prop:sq}. 
In addition, Corollary \ref{sq-cor} yields $\pker(\pi_{\ell}) = G(\ell)$, which is the connected Lie subgroup of $G$ with Lie algebra $\gg(\ell)$. 
The quotient Lie algebra $\gg / \gg(\ell)$ is isomorphic to the Lie algebra of the affine group, and hence the quotient group $G/\pker(\pi_{\ell}) = G/G(\ell)$ is nonunimodular.

For showing that $\Oc_\ell \subseteq \ell+\gg(\ell)^\perp$, assume towards a contradiction that $\Oc_\ell = \ell+\gg(\ell)^\perp$.
Since $-\ell\vert_{\gg(\ell)} =0$, it follows that $-\ell \in \gg(\ell)^\perp$. 
If our assumption were true, then $0\in \Oc_\ell$, that is, $\Oc_\ell=\{0\}$.
This is a contradiction, hence 
$\Oc_\ell \subseteq \ell +\gg(\ell)$.
\end{example}

\section{Symplectic projective orbits and square-integrable representations} \label{sec:symplectic}

This section is devoted to the relation between the existence of symplectic coherent state orbits and the square-integrability of a representation. 

\subsection{Symplectic coherent state orbits}
Let $G$ be a connected Lie group with Lie algebra $\mathfrak{g}$  and let $(\pi, \Hpi)$ 
be an irreducible unitary representation of $G$. 
Denote by $\Hpi^{\infty}$  the space of smooth vectors of $\pi$, i.e., the family of vectors $\eta \in \Hpi$ such that the orbit map $x \mapsto \pi(x) \eta$ is smooth. 

The projective space $P(\Hpi)$ is then a
Hilbert
manifold with respect to the local charts $(U_\eta, \varphi_\eta)$ at a point $[\eta] \in P(\Hpi)$, 
$\eta \in \Hpi \setminus \{0\}$, defined by
$$ U_\eta=\{ [\xi]\in P(\Hc) \colon  \scalar{\xi}{\eta} \neq 0\}, \quad \varphi_\eta([\xi])= \Vert \eta\Vert^2 \frac{\xi}{\scalar{\xi}{\eta}}-\eta.$$
The $2$-form on 
 $P(\Hc)$
 defined in the chart $\varphi_\eta$ by 
$$ \omega^{P(\Hc)}_{[\eta]}(w_1, w_2)= 2\,
\frac{\im \scalar{T_{[\eta]} (\varphi_\eta)( w_1)}{T_{[\eta]}( \varphi_{\eta})(w_2)}}{\Vert\eta\Vert^2},
\quad 
w_1, w_2\in T_{[\eta]}(P(\Hpi)), 
$$
makes 
 $P(\Hc)$
  into a symplectic Hilbert manifold. 
  Here, for a smooth map $f\colon M\to N$ between Fr\'echet manifolds $M$, $N$, and $p\in M$, $T_p(f) \colon T_p M \to T_{f(p)} N$ denotes the linear tangent map. 
  
  Similarly, the projective space $P(\Hc^\infty)$ is a
Fr\'echet 
manifold with respect to the local charts $(U_\eta, \phi_\eta)$ at a point $[\eta] \in P(\Hc^\infty)$, 
$\eta \in \Hc^\infty \setminus \{0\}$, defined by 
$$ U_\eta=\{ [\xi]\in P(\Hc^\infty) \mid \scalar{\xi}{\eta} \neq 0\}, \quad \phi_\eta([\xi])= \Vert \eta\Vert^2 \frac{\xi}{\scalar{\xi}{\eta}}-\eta.$$
The natural inclusion $i\colon P(\Hc^\infty) \to P(\Hc)$ is smooth, and 
the $2$-form $\omega^{P(\Hc^\infty)}= i^* \omega^{P(\Hc)}$ of 
 $P(\Hc^\infty)$, given by
\begin{equation}\label{omega}
\omega^{P(\Hc^\infty)}_{[\eta]}(w_1, w_2)= 2\,
\frac{\im \scalar{T_{[\eta]}(\phi_{\eta})(w_1)}{T_{[\eta]}(\phi_{\eta})(w_2)}}{\Vert\eta\Vert^2}, \quad 
w_1, w_2\in T_{[\eta]}(P(\Hpi^\infty)), 
\end{equation}
is symplectic. 
The mapping $i$ is an immersion. 
(See, e.g., \cite[Sect.~4.3]{arnal2011universal} for more
details.)

As mentioned before, the group $G$ acts on the projective spaces 
$P(\Hc)$ and $P(\Hc^\infty)$
by $(x, [\eta])\mapsto x\cdot [\eta] = [\pi(x)\eta]$.
The action on $P(\Hc^\infty)$ is smooth, Hamiltonian and its moment map 
$J_\pi \colon  P(\Hpi^\infty)\to \gg^*$
is given by
\begin{equation}\label{m-map}
\dual{J_\pi([\eta])}{X}=\frac{1}{i} \frac{\scalar{\de \pi(X)\eta}{\eta}}{\Vert\eta\Vert^2}, \quad [\eta]\in P(\Hpi^\infty), \; X\in \gg,
\end{equation}
see, e.g., \cite[Prop.~4.6]{arnal2011universal}.

For $\eta \in \Hc\setminus \{0\}$, the orbit 
$$G\cdot [\eta] = \{[\pi(x)\eta] \mid x\in G\}$$ 
is a smooth immersed submanifold
of $P(\Hc)$ if (and only if) $\eta\in \Hc^\infty$
(see \cite[Prop.~2.4]{lisiecki1990kaehler}).
If $\eta \in \Hc^\infty\setminus \{0\}$,  $G\cdot [\eta] \subseteq P(\Hc^\infty)$, and since the inclusion
$i\colon P(\Hc^\infty)\hookrightarrow P(\Hc)$ is an immersion, it follows that
$G\cdot [\eta]$ is a smooth immersed submanifold
of $P(\Hc^\infty)$.
In that case, let $\iota\colon G \cdot [\eta] \to P(\Hc^\infty)$ be the canonical immersion. 
The pullback $\iota^* \omega^{P(\Hc^\infty)}=:\omega^{G\cdot [\eta]}$ is a $G$-invariant
closed $2$-form on $G\cdot [\eta]$. 
The orbit $G\cdot [\eta]$ is \emph{symplectic} if the form $\omega^{G\cdot [\eta]}$ is symplectic. 

The following lemma  is well-known for finite-dimensional representations of connected Lie groups, see, e.g., \cite[Theorem 26.8]{guillemin1984symplectic}. As we are not aware of a reference in the infinite-dimension case, we provide its short proof.
\begin{lemma} \label{lem:symplectic}
 Let $(\pi, \Hpi)$ be an irreducible representation of a connected Lie group $G$. 
 For $\eta\in \Hc^\infty\setminus \{0\}$,
 the orbit $G \cdot [\eta]$ is symplectic if and only if $G_{[\eta]}$ is an open subgroup of $G(J_{\pi} ([\eta]))$.
\end{lemma}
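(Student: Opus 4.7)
The plan is to pull the $2$-form $\omega^{G\cdot[\eta]}$ back to $\gg$ through the orbit map $\sigma\colon G\to P(\Hc^\infty)$, $\sigma(g)=[\pi(g)\eta]$, and to compare its radical with the Lie algebra $\gg_{[\eta]}$ of the closed (hence Lie) subgroup $G_{[\eta]}$. First I would expand $\phi_\eta\circ\sigma$ to first order at the identity to obtain
$$T_e\sigma(X)=\de\pi(X)\eta-\frac{\scalar{\de\pi(X)\eta}{\eta}}{\Vert\eta\Vert^2}\,\eta,$$
i.e.\ the projection of $\de\pi(X)\eta$ orthogonal to $\eta$. Its kernel is precisely $\gg_{[\eta]}=\{X\in\gg\mid \de\pi(X)\eta\in\CC\eta\}$, so $T_e\sigma$ descends to a linear isomorphism $\gg/\gg_{[\eta]}\xrightarrow{\sim} T_{[\eta]}(G\cdot[\eta])$.

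Second, I would plug $T_e\sigma(X)$ and $T_e\sigma(Y)$ into \eqref{omega}. Because $\de\pi(X)$ and $\de\pi(Y)$ are essentially skew-adjoint, the diagonal matrix coefficients $\scalar{\de\pi(X)\eta}{\eta}$ and $\scalar{\de\pi(Y)\eta}{\eta}$ are purely imaginary, and the cross term produced by the projection is therefore real; hence it contributes nothing to $\im$. What remains is
$$(\sigma^*\omega^{P(\Hc^\infty)})_e(X,Y)=\frac{2\,\im\scalar{\de\pi(X)\eta}{\de\pi(Y)\eta}}{\Vert\eta\Vert^2},$$
and a short skew-adjoint manipulation rewrites the numerator as $\frac{1}{i}\scalar{\de\pi([X,Y])\eta}{\eta}$. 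Comparing with \eqref{m-map} yields the standard Kirillov--Kostant--Souriau identity
$$(\sigma^*\omega^{P(\Hc^\infty)})_e(X,Y)=\dual{J_\pi([\eta])}{[X,Y]}.$$

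Third, I would identify the radical of $(X,Y)\mapsto\dual{\ell}{[X,Y]}$ on $\gg$, where $\ell:=J_\pi([\eta])$. Since $\dual{\ell}{[X,Y]}=-\dual{\ad^*(X)\ell}{Y}$, this radical equals $\gg(\ell)$. Equivariance of the moment map, $J_\pi(x\cdot[\eta])=\Ad^*(x)J_\pi([\eta])$, shows $G_{[\eta]}\subseteq G(\ell)$, hence $\gg_{[\eta]}\subseteq\gg(\ell)$. The bilinear form induced on $\gg/\gg_{[\eta]}\cong T_{[\eta]}(G\cdot[\eta])$ is therefore nondegenerate precisely when $\gg_{[\eta]}=\gg(\ell)$, which is equivalent to $G_{[\eta]}$ being open in $G(\ell)=G(J_\pi([\eta]))$. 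By $G$-invariance of $\omega^{G\cdot[\eta]}$, nondegeneracy at $[\eta]$ propagates to every point of the orbit.

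The main delicate point is that $P(\Hc^\infty)$ is only a Fr\'echet manifold, so I must verify that ``nondegeneracy'' of the closed $2$-form $\omega^{G\cdot[\eta]}$ at a point really does translate, via the immersion $\iota$ and the identification of $T_{[\eta]}(G\cdot[\eta])$ with the finite-dimensional quotient $\gg/\gg_{[\eta]}$, into the pointwise linear-algebra condition used above; once $T_e\sigma$ and its kernel are under control, this reduces to a standard argument, and the rest is a formal computation on $\gg$.
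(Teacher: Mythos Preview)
Your proposal is correct and follows essentially the same route as the paper: pull back $\omega^{P(\Hc^\infty)}$ along the orbit map, compute the first-order expansion of $\phi_\eta\circ\sigma$ at the identity, use skew-adjointness of $\de\pi(X)$ to reduce to $\dual{J_\pi([\eta])}{[X,Y]}$, and identify the radical with $\gg(J_\pi([\eta]))$. The paper phrases the reduction through the factorisation $\alpha=\iota\circ q$ (so that the pullback to $G$ automatically has $\gg_{[\eta]}$ in its radical via $\ker T_1 q$), whereas you argue directly that $\ker T_e\sigma=\gg_{[\eta]}$; these are two ways of saying the same thing, and the remaining computation is line-for-line identical.
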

\begin{proof} 
First note that  $G_{[\eta]} \subseteq G(J_\pi([\eta]))$ for $\eta \in \Hpi^{\infty} \setminus \{0\}$, hence the claim that
$G_{[\eta]}$ is an open subgroup of $G(J_{\pi} ([\eta]))$ is equivalent with 
$ \gg_{[\eta]}= \gg (J_\pi([\eta]))$.

Fix $\eta\in \Hc^\infty\setminus \{0\}$ and denote $\Omega:= G\cdot [\eta]$.
The form $\omega^\Omega$ is $G$-invariant, hence it is enough to show that  
it
is nondegenerate at $[\eta]$ if and only if $ \gg_{[\eta]}= \gg (J_\pi([\eta]))$.

Let $q\colon G \to \Omega \simeq G/G_{[\eta]}$ be the orbit map, and
denote $\alpha \colon G \to P(\Hpi^\infty)$,   $\alpha(g)= [\pi(g) \eta]$.
Then $\alpha = \iota \circ q$, hence 
$$ \alpha^* (\omega^{P(\Hpi^\infty)}) = q^* (\iota^* \omega^{P(\Hpi^\infty)})= q^* (\omega^{\Omega}).$$
Denote $\theta:=q^* (\omega^{\Omega})$. 
Then $\omega^\Omega_{[\eta]} $ is nondegenerate if and only if 
for $\theta _1 \colon  T_1 G \times T_1G =\gg \times \gg \to \RR$ we have 
$ \gg^{\perp_{\theta_1}} := \{ X \in \mathfrak{g} : \theta_1 (X,Y) = 0 \; \text{for all $Y  \in \mathfrak{g}$} \} = \gg_{[\eta]}$. 

On the other hand, $\theta = \alpha^* (\omega^{P(\Hpi^\infty)})$, hence for $X, Y\in \gg$, 
$$\theta_1 (X, Y) = \omega^{P(\Hpi^\infty)}_{[\eta]}( T_1(\alpha)(X) , T_1(\alpha)(Y)).$$
For $X\in T_1G =\gg$, we have 
\begin{align*}
T_1 (\phi_\eta \circ \alpha) (X) & =\frac{d}{dt}\Big\vert_{t=0} (\phi_\eta \circ \alpha)(\exp (tX)) \\
& = \de\pi(X) \eta-\frac{\scalar{\de \pi(X) \eta}{\eta}}{\Vert \eta \Vert^2 }\eta.
\end{align*}
Therefore, by \eqref{omega}, we can write 
\begin{align*}\theta_1 (X, Y) &=
 \omega^{P(\Hpi^\infty)}_{[\eta]}( T_1(\alpha)(X) , T_1(\alpha)(Y))\\
&=\frac{2}{\Vert \eta\Vert^2 }\text{Im}\Big(
\de \pi(X)\eta-\frac{\scalar{\de \pi(X)\eta}{\eta}}{\Vert \eta\Vert^2 }\eta, 
\de \pi(Y)\eta -\frac{ \scalar{\de \pi(Y)\eta}{\eta}} {\Vert\eta\Vert^2}\eta \Big)  \\
&=\frac{2}{\| \eta \|^2 } \im\scalar{\de \pi(X)\eta}{\de \pi(Y)\eta}\\
& = \frac{1}{i\Vert \eta\Vert^2} \scalar{\de \pi([X, Y]) \eta}{ \eta}\\
& = \dual{J_\pi([\eta])}{[X, Y]}, 
\end{align*}
where we have  used that $\scalar{\de \pi(X)\eta}{\eta}$ is purely imaginary. 
Therefore, $\gg^{\perp_{\theta_1}} =\gg(J_\pi([\eta]))$, and 
we have thus obtained that  $\omega^\Omega$ is symplectic if and only if 
$ \gg_{[\eta]}= \gg (J_\pi([\eta]))$. 
\end{proof}

\subsection{Coherent state representations} \label{sec:coherentstate}
This subsection is devoted to the question which exponential Lie groups admit coherent state representations and which representations are the coherent state representations.
Throughout this subsection,  let $G = \exp(\mathfrak{g})$ be an exponential Lie group and let $\pi$ be an irreducible representation of $G$. 
We recall that the representation $\pi$ is said to be a (symplectic) \emph{coherent state representation} if there exists $\eta \in \Hpi^{\infty} \setminus \{0\}$ such that $G \cdot [\eta]$ is symplectic.

We start with the following simple consequence of Lemma \ref{lem:symplectic}.

\begin{lemma}\label{symp-cor}
Let $G$ be an exponential Lie group and $\pi$ be an irreducible representation of $G$. For $\eta \in \Hpi^{\infty} \setminus \{0\}$, the orbit $G \cdot [\eta]$ is symplectic if and only if $\pker(\pi) = G(J_{\pi} ([\eta]))$.
\end{lemma}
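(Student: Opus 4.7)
The plan is to combine the two main ingredients already prepared in the paper: Lemma~\ref{lem:symplectic}, which characterises symplecticity of $G\cdot[\eta]$ via an openness condition on the projective stabiliser, and Proposition~\ref{prop:stabilizer_kernel}, which identifies the projective stabiliser with $\pker(\pi)$ in the exponential setting. The remaining bridge is a connectedness argument to upgrade ``open subgroup'' to ``equal subgroup''.

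First I would write down the chain of inclusions $\pker(\pi)\subseteq G_{[\eta]}\subseteq G(J_\pi([\eta]))$ valid for any $\eta\in\Hpi^\infty\setminus\{0\}$ (the first inclusion is immediate from the definitions, the second is the content of the opening remark in the proof of Lemma~\ref{lem:symplectic}, and in fact $G_{[\eta]}=\pker(\pi)$ by Proposition~\ref{prop:stabilizer_kernel}). Thus Lemma~\ref{lem:symplectic} can be rephrased as follows: $G\cdot[\eta]$ is symplectic if and only if $\pker(\pi)$ is open in $G(J_\pi([\eta]))$.

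The key remaining observation is that in an exponential Lie group the stabiliser satisfies $G(\ell)=\exp(\gg(\ell))$ for every $\ell\in\gg^*$, so $G(\ell)$ is connected as the continuous image of the connected set $\gg(\ell)$. On the other hand, $\pker(\pi)$ is also connected, as noted just after its definition (it equals an intersection of stabilisers $G(\ell)$, all of which are connected in the exponential case, and \cite[Thm.~2.1]{bekka1990complemented} expresses this intersection as a connected subgroup). Therefore an open subgroup of $G(J_\pi([\eta]))$ that is itself connected must coincide with $G(J_\pi([\eta]))$.

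Putting this together, the forward direction reads: if $G\cdot[\eta]$ is symplectic, then the reformulated Lemma~\ref{lem:symplectic} gives $\pker(\pi)$ open in the connected group $G(J_\pi([\eta]))$, whence $\pker(\pi)=G(J_\pi([\eta]))$ by connectedness. The converse is immediate, since equality of subgroups trivially implies the openness needed to invoke Lemma~\ref{lem:symplectic}. There is no real obstacle here; the statement is essentially a bookkeeping corollary of Lemma~\ref{lem:symplectic} and Proposition~\ref{prop:stabilizer_kernel}, with the connectedness of $G(\ell)$ in exponential groups doing the last small piece of work.
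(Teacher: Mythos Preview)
Your proof is correct and follows essentially the same route as the paper: apply Lemma~\ref{lem:symplectic}, use that $G(J_\pi([\eta]))$ is connected for exponential $G$ to upgrade openness to equality, and invoke Proposition~\ref{prop:stabilizer_kernel}. One small redundancy: the connectedness of $\pker(\pi)$ is irrelevant here---any open subgroup of a connected topological group is automatically the whole group---so that remark can be dropped.
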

\begin{proof}
By
Lemma~\ref{lem:symplectic} and since $G$ is exponential, 
thus $G(J_{\pi}([\eta]))$ is connected,  we get that
$G \cdot [\eta]$ is symplectic if and only if $G_{[\eta]}= G(J_{\pi} ([\eta]))$.
Now the result is a consequence of Proposition~\ref{prop:stabilizer_kernel}.
\end{proof}

The link between the moment map and the coadjoint orbit associated to $\pi$ is 
encoded in the fundamental identity
\begin{equation}\label{moment}
\overline{\Ran J_\pi}=\overline{\text{conv} \;\Oc_\pi};
 \end{equation}
 see \cite[Cor.~8, p.~274]{arnal1992convexity}. 

The next proposition uses the facts above to give the relation between the coherent state representations and the representations that are square-integrable representations modulo their projective kernel, for the case of general exponential Lie groups. 

\begin{proposition} \label{prop:non1mod}
 Let $G$ be an exponential Lie group and  let $\pi\colon G \to \mathcal{U}(\Hpi)$
 be an irreducible representation of $G$.
 \begin{enumerate}[\rm (i)]
     \item\label{non1mod_i}
     If  $\pi$  is square-integrable modulo $\pker(\pi)$, then it is a coherent state representation.
     \item \label{non1mod_ii}
      If $\pi$ is  a coherent state representation, then $G$ has a representation $\theta$ that is square-integrable modulo its projective kernel and $\pker(\theta)=\pker(\pi)$. 
      In addition, if the coadjoint orbit $\Oc_\pi$ associated to $\pi$ is of maximal dimension, then $\pi$ is square-integrable modulo $\pker(\pi)$.
 \end{enumerate}
\end{proposition}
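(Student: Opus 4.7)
My plan is to use Lemma~\ref{symp-cor}, which reduces symplecticness of $G\cdot[\eta]$ to the equality $\pker(\pi)=G(J_\pi([\eta]))$, together with the moment identity \eqref{moment}; with these two tools both implications will follow from bookkeeping about the stabilisers $G(\ell)$ and $G(J_\pi([\eta]))$.

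For (\ref{non1mod_i}), I would fix $\ell\in\Oc_\pi$, note that Corollary~\ref{sq-cor} delivers $\pker(\pi)=G(\ell')$ for every $\ell'\in\Oc_\pi$, and that Proposition~\ref{prop:sq} gives $\Oc_\pi$ open in the affine hull $\ell+\gg(\ell)^\perp$. Lemma~\ref{symp-cor} will reduce the problem to producing some $\eta\in\Hpi^\infty\setminus\{0\}$ with $J_\pi([\eta])\in\Oc_\pi$. For such existence my plan is a soft topological density argument: the affine hull being closed and convex, $\overline{\conv\Oc_\pi}\subseteq\ell+\gg(\ell)^\perp$, and consequently $\Oc_\pi$ remains open when viewed inside $\overline{\conv\Oc_\pi}$ with the subspace topology from $\gg^*$; since \eqref{moment} makes $\Ran J_\pi$ dense in $\overline{\conv\Oc_\pi}$, it must meet this nonempty relatively open subset.

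For (\ref{non1mod_ii}), I would pick $\eta_0\in\Hpi^\infty\setminus\{0\}$ witnessing that $\pi$ is a coherent state representation and set $\ell_0:=J_\pi([\eta_0])$; Lemma~\ref{symp-cor} gives $G(\ell_0)=\pker(\pi)$, which is normal in $G$. Letting $\theta$ be the irreducible representation of $G$ assigned by the orbit method to $\Oc_{\ell_0}$, Proposition~\ref{prop:sq}(iv) then shows $\theta$ is square-integrable modulo its projective kernel, and Corollary~\ref{sq-cor} identifies $\pker(\theta)=G(\ell_0)=\pker(\pi)$. For the additional statement, I would assume $\Oc_\pi$ has maximal dimension and set $d:=\min_{\mu\in\gg^*}\dim\gg(\mu)$, so that $\dim\gg(\ell)=d$ for every $\ell\in\Oc_\pi$; combining $G(\ell_0)=\pker(\pi)\subseteq G(\ell)$ for $\ell\in\Oc_\pi$ (from $\pker(\pi)=\bigcap_{\ell\in\Oc_\pi}G(\ell)$) with the bound $\dim\gg(\ell_0)\geq d$ forces $\gg(\ell_0)=\gg(\ell)$, and then $G(\ell_0)=G(\ell)$ by connectedness of the stabilisers in an exponential group, so $\pker(\pi)=G(\ell)$ for every $\ell\in\Oc_\pi$, and Corollary~\ref{sq-cor} concludes.

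I expect the main obstacle to be the existence step in (\ref{non1mod_i}): were $G/\pker(\pi)$ unimodular, Lemma~\ref{rose} would give $\Oc_\pi=\overline{\conv\Oc_\pi}$ and existence would be immediate, so it is precisely the soft density argument above that has to carry the proof through in the possibly nonunimodular case.
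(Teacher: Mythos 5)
Your proposal is correct and follows essentially the same route as the paper: part (\ref{non1mod_i}) via the openness of $\Oc_\pi$ in its affine hull, the identity \eqref{moment}, and the density of $\Ran J_\pi$ in $\overline{\conv\Oc_\pi}$ to produce $\eta$ with $J_\pi([\eta])\in\Oc_\pi$, then Corollary~\ref{sq-cor} and Lemma~\ref{symp-cor}; and part (\ref{non1mod_ii}) via Lemma~\ref{symp-cor}, Proposition~\ref{prop:sq}, and the same dimension count $\gg(J_\pi([\eta_0]))\subseteq\gg(\ell)$ with $\dim\gg(J_\pi([\eta_0]))\geq\dim\gg(\ell)$ in the maximal-dimension case. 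No gaps.
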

\begin{proof}
Assume that $\pi$ is square-integrable modulo $\pker (\pi)$. 
Then its coadjoint orbit $\Oc_\pi$ is open in $\ell+\gg(\ell)^\perp$ by Proposition~\ref{prop:sq}, and hence
its affine hull is $\ell+\gg(\ell)^\perp$. 
The set $\overline{\Ran J_\pi}$ is closed in $\ell +\gg(\ell)^\perp$
and contains the open subset $\Oc_\pi$, by Equation \eqref{moment}. 
In particular, this shows that $\Ran J_\pi \cap \Oc_\pi \ne \emptyset$.
Thus, there exists $\eta\in \Hc^\infty \setminus \{0\}$ such that $\Ad^*(G) (J_\pi([\eta]))=\Oc_\pi$, and hence $G(J_\pi([\eta]))=\pker(\pi)$ by Corollary~\ref{sq-cor}. The claim follows now from Lemma~\ref{symp-cor}.

If $\pi$ is a coherent state representation of $G$, then there exists $\eta \in \Hpi^{\infty} \setminus \{0\}$ such that $G \cdot [\eta]$ is symplectic. By Lemma \ref{symp-cor}, this implies that $G(J_{\pi} ([\eta])) = \pker(\pi)$ is a normal subgroup. Therefore, the irreducible representation $\theta=\pi_{J_{\pi}([\eta])}$ associated to  $J_{\pi} ([\eta]) \in \mathfrak{g}^*$ is square-integrable modulo its projective kernel by Proposition \ref{prop:sq}.
Assume, in addition,  that $\Oc_\pi$ is of maximal dimension. 
On the one hand, for arbitrary $\ell\in \Oc_\pi$, we have $G(J_\pi([\eta]) =\pker(\pi)\subseteq G(\ell)$, and hence 
$\gg(J_\pi([\eta])) \subseteq \gg(\ell)$.
On the other hand, since $\Oc_\pi$ has maximal dimension, $\dim(\gg(J_\pi([\eta]))) \ge \dim (\gg(\ell))$.
It follows that $\gg(J_\pi([\eta]))=\gg(\ell)$, therefore $G(J_\pi([\eta])) =\pker(\pi)= G(\ell)$.
Hence, $\pi$ is square-integrable modulo $\pker(\pi)$.
\end{proof}

In general, not every coherent state representation is square-integrable modulo its projective kernel, as we will show in Example~\ref{ex:counterexample_nonunimodular}. 
This is, however, true for exponential groups $G$ and representations $\pi$ such that $G/\pker(\pi)$ is unimodular.
In fact, we have the following characterisation.

\begin{theorem} \label{thm:symplectic}
 Let $\pi$ be an irreducible representation of an exponential Lie group $G$ such that $G/\pker(\pi)$ is unimodular. Then the following assertions are equivalent:
 \begin{enumerate}[\rm (i)]
\item\label{lis_i} There exists $\eta \in \Hc^\infty\setminus \{0\}$ such that $G\cdot [\eta]$ is symplectic.
\item\label{lis_ii} For every $\eta \in \Hc^\infty\setminus \{0\}$ the orbit $G\cdot [\eta]$ is symplectic.
\item\label{lis_iii} $\pi$ is square-integrable modulo $\pker (\pi)$.
 \end{enumerate}
\end{theorem}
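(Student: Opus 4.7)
The plan is to prove (iii) $\Rightarrow$ (ii) $\Rightarrow$ (i) $\Rightarrow$ (iii); the middle implication is immediate since $\Hpi^\infty \setminus \{0\} \neq \emptyset$. The first implication rests on Lemma~\ref{rose} and the identity \eqref{moment}, whereas the last and most delicate will identify $\Oc_\pi$ with the coadjoint orbit of an auxiliary square-integrable representation.

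For (iii) $\Rightarrow$ (ii), assume $\pi$ is square-integrable modulo $\pker(\pi)$. Lemma~\ref{rose} gives $\Oc_\pi = \ell + \gg(\ell)^\perp$ for every $\ell \in \Oc_\pi$, so $\Oc_\pi$ is closed and convex. The identity \eqref{moment} then forces $\Ran J_\pi \subseteq \overline{\conv\,\Oc_\pi} = \Oc_\pi$, hence $J_\pi([\eta]) \in \Oc_\pi$ for every $\eta \in \Hpi^\infty\setminus\{0\}$. Corollary~\ref{sq-cor} yields $G(J_\pi([\eta])) = \pker(\pi)$, and Lemma~\ref{symp-cor} concludes that $G\cdot[\eta]$ is symplectic.

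For (i) $\Rightarrow$ (iii), suppose there exists $\eta \in \Hpi^\infty\setminus\{0\}$ with $G\cdot[\eta]$ symplectic. By Lemma~\ref{symp-cor}, $G(J_\pi([\eta])) = \pker(\pi)$ is normal in $G$, and Proposition~\ref{prop:sq} together with Corollary~\ref{sq-cor} ensure that the representation $\theta := \pi_{J_\pi([\eta])}$ is square-integrable modulo $\pker(\theta) = \pker(\pi)$. Applying Lemma~\ref{rose} to $\theta$ (permitted since $G/\pker(\theta)$ is unimodular by hypothesis) gives $\Oc_\theta = J_\pi([\eta]) + \gg(J_\pi([\eta]))^\perp$. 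The crux will be to show $\Oc_\pi \subseteq \Oc_\theta$: distinct coadjoint orbits are disjoint, so this forces $\Oc_\pi = \Oc_\theta$, whence $\gg(\ell)$ equals the ideal $\gg(J_\pi([\eta]))$ for every $\ell \in \Oc_\pi$, and Proposition~\ref{prop:sq} delivers (iii).

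The main obstacle is therefore the inclusion $\Oc_\pi \subseteq \Oc_\theta$, which I would obtain via the following observation. Let $\kg$ be the Lie algebra of $\pker(\pi) = G(J_\pi([\eta]))$; since $\pker(\pi) \subseteq G(\ell)$ for every $\ell \in \Oc_\pi$ and $G$ is exponential, $\kg \subseteq \gg(\ell)$, and therefore for every $X \in \kg$ the function $\ell' \mapsto \dual{\ell'}{X}$ is constant on $\Oc_\pi$ (its derivative at $\ell$ in the direction $\ad^*(Y)\ell$ equals $\pm\ell([X,Y]) = 0$). This constant value extends by convexity and continuity to $\overline{\conv\,\Oc_\pi}$, and \eqref{moment} places $J_\pi([\eta])$ in this set, so $\dual{J_\pi([\eta])}{X} = \dual{\ell}{X}$ for all $X \in \kg$ and $\ell \in \Oc_\pi$. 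Hence $\ell - J_\pi([\eta]) \in \kg^\perp = \gg(J_\pi([\eta]))^\perp$, that is, $\ell \in \Oc_\theta$. Without unimodularity of $G/\pker(\pi)$, Lemma~\ref{rose} would not supply the affineness of $\Oc_\theta$ which is essential in this step, consistent with the nonunimodular counterexample in Example~\ref{ex:counterexample_nonunimodular}.
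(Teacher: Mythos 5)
Your proposal is correct and follows essentially the same route as the paper: both directions rest on Lemma~\ref{rose}, the moment-map identity \eqref{moment}, Lemma~\ref{symp-cor} and Proposition~\ref{prop:sq}/Corollary~\ref{sq-cor}, with the same auxiliary square-integrable representation $\pi_{J_\pi([\eta])}$ in (i)\,$\Rightarrow$\,(iii). The only cosmetic difference is in how the inclusion $\Oc_\pi \subseteq J_\pi([\eta]) + \pg_\pi^\perp$ is obtained: the paper computes $J_\pi(\eta')\vert_{\pg_\pi}=\ell\vert_{\pg_\pi}$ for all $\eta'$ directly from $\pi(\exp X)\eta'=\ee^{\ie\dual{\ell}{X}}\eta'$, $X\in\pg_\pi$, and pins down $\Ran J_\pi=\ell+\pg_\pi^\perp$ exactly, whereas you deduce the constancy of $\dual{\cdot}{X}$ on $\overline{\conv\,\Oc_\pi}\ni J_\pi([\eta])$ from the coadjoint geometry alone; both arguments are valid.
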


\begin{proof}
We can assume that $\pi=\pi_\ell= \ind_H^G\chi_\ell$, with 
$\ell\in \Oc_\pi= \Ad^*(G)\ell$. 

Let $\mathfrak{p}_{\pi}$ be the Lie algebra of $\pker(\pi)$. Then,
for every $X \in \pg_\pi$ and $\eta\in \Hc^\infty \setminus \{0\}$,  we have that
$\pi(\exp X)\eta =\ee^{\ie \dual{\ell}{X}}\eta$
(cf. the proof of \cite[Thm.~2.1]{bekka1990complemented}). Hence, 
$$ \de \pi(X)\eta =\frac{\de }{\de t} \pi(\exp tX)\eta\vert_{t=0}=
\ie \dual{\ell}{X} \eta,
$$ 
and
$$ \dual{J_\pi(\eta)}{X} =\dual{\ell}{X} \; \text{ for every } X\in \pg_\pi.$$
Since $\pg_\pi$ is an ideal, this implies that
\begin{equation}\label{1002}
\begin{gathered}
J_\pi(\eta)\vert_{\pg_\pi}=\ell\vert_{\pg_\pi} \quad \text{and} \quad
\Ad^*(G) J_\pi(\eta) \subseteq \ell +\pg_\pi^\perp,
\end{gathered}
\end{equation}
for every $\eta \in \Hpi^\infty\setminus \{0\}$.

\eqref{lis_i} $\Rightarrow$ \eqref{lis_iii}
Let $\eta \in \Hc^\infty\setminus \{0\}$ be such that the orbit $G\cdot [\eta]$ is symplectic. 
Then, by Lemma \ref{symp-cor}, it follows that
$G(J_\pi([\eta]))=\pker(\pi)$, and therefore $G(J_\pi([\eta]))$ is a normal subgroup of $G$. Thus, $\pi_{J_{\pi}([\eta])}$ is square-integrable modulo its projective kernel by Proposition \ref{prop:sq}. Moreover, an application of \cite[Lem.~5.3.7]{fujiwara2015harmonic} yields that $ \pker(\pi_{J_{\pi} ([\eta])}) = G(J_\pi ([\eta])) = \pker(\pi)$. 
Using Lemma~\ref{rose}  and Equation \eqref{1002}, it follows therefore that
$$ \Ad^* (G) J_\pi([\eta]) = J_\pi([\eta])  +\pg_\pi^\perp
=\ell  +\pg_\pi^\perp \subseteq \Ran J_\pi,
$$
since $\Ran J_\pi$ is $\Ad^*$ invariant.
On the other hand, for any $\eta' \in \Hc^\infty\setminus \{0\}$, 
$$  \Ad^* (G) J_\pi([\eta']) \subseteq J_\pi([\eta'])  +\pg_\pi^\perp
=\ell  +\pg_\pi^\perp,$$
by Equation \eqref{1002} again. 
Thus,
$$  \Ran J_\pi =\ell  +\pg_\pi^\perp.$$
This is already a closed subset of $\gg^*$ so, by Equation \eqref{moment}, it follows that
$$ \overline{\text{conv}\, \Oc_\pi}=\ell +\pg_\pi^\perp.$$
In particular, this implies that $\Oc_\pi \subseteq \ell+\pg_\pi^\perp= \Ad^*(G)J_\pi([\eta])$.
Since coadjoint orbits are either disjoint or equal,
it follows that  $\Oc_\pi= \Ad^*(G)J_\pi([\eta])$, and hence $\pi$ is square-integrable modulo $\pker(\pi)$ by Proposition \ref{prop:sq}.

\eqref{lis_ii} $\Rightarrow$ \eqref{lis_i} is trivial.

\eqref{lis_iii} $\Rightarrow$ \eqref{lis_ii}
Assume that $\pi$ is square-integrable modulo $\pker(\pi)$. 
Then, by Lemma~\ref{rose}, the coadjoint orbit $\Oc_\pi$ is equal to its affine hull, and hence an application of Corollary \ref{sq-cor} gives
$$G(\ell) =\pker(\pi)  \; \text{ for every }\; \ell\in \Oc_\pi=\overline{\text{conv}\Oc_\pi}= \overline{\Ran J_\pi}.
$$
Combining this with Proposition \ref{prop:stabilizer_kernel} gives $G(J_\pi(\eta))=\pker(\pi) = G_{[\eta]}$ for every $\eta\in \Hpi^\infty\setminus\{0\}$. Thus, each orbit $G \cdot [\eta]$, $\eta \in \Hpi^{\infty} \setminus \{0\}$, is symplectic by Lemma \ref{symp-cor}.
\end{proof}

Theorem~\ref{thm:intro} is now a consequence of Theorem~\ref{thm:symplectic}.

 \begin{proof}[Proof of Theorem~\ref{thm:intro}]
  Assume that the kernel of $\pi$ is discrete.  
     Then $\ker(\pi)$ is a central subgroup of $G$, hence 
     $G/\ker(\pi) $ is unimodular.
     On the other hand, by \cite[Thm.~2.1]{bekka1990complemented}, we have either that $\pker(\pi)=\ker(\pi)$ or that $\pker(\pi)/\ker(\pi)$ is nonempty and compact. 
     In the first case, $G/\pker(\pi)$ is clearly unimodular. 
     If $\pker(\pi)/\ker(\pi)$ is compact, then it is unimodular. 
     Since it is the centre of the group $G/\ker(\pi)$, it follows that $G/\pker(\pi)= (G/\ker(\pi))/(\pker(\pi)/\ker(\pi))$ is also unimodular, see, e.g., \cite[Rem.~6]{aniello2006square}.

     For proving the theorem, it therefore remains to show that under the hypotheses of the theorem, $\pi$ is square-integrable modulo 
     its projective centre if and only if it is square-integrable modulo $Z(G)$.
     Indeed, $\pi$ is square-integrable modulo $\pker(\pi)$ if and only if  $\pi'\colon G/\ker(\pi)\to \Uc(\Hc)$ is square-integrable (\cite[Cor.~2.1]{bekka1990complemented}). 
     On the other hand, we have that the representation $\pi'\colon G/\ker(\pi)\to \Uc(\Hc)$ is square-integrable if and only if $\pi''\colon G/(\ker (\pi))_0 \to \Uc(\Hc)$ is square-integrable modulo the centre of $G/(\ker(\pi))_0$, where 
     $(\ker (\pi))_0$ is the connected component of the identity of $\ker (\pi)$, see, e.g., \cite[Cor.~1.2.3]{moscovici1978coherent}. 
     Since $\ker (\pi) $ is discrete the last assertion is equivalent with the fact that $\pi$ is square-integrable modulo $Z(G)$. 

Assume that one (and then all) of  the equivalent  conditions in the statement holds. 
      If $Z(G)=\{0\}$, then $\pi$ is square-integrable in the strict sense. However, since $G$ is unimodular, this is impossible (see \cite[Cor.~4.2]{beltita2023squareintegrable}), therefore $Z(G)$ must be nontrivial.
    \end{proof}

As already mentioned above, the implication \eqref{lis_i} $\Rightarrow$ \eqref{lis_iii}  of   Theorem~\ref{thm:symplectic}
might fail for general exponential Lie groups. This is demonstrated by the following example.

\begin{example} \label{ex:counterexample_nonunimodular}
    Let $G$ be the exponential Lie group with Lie algebra 
$$\gg= \mathrm{span} \{A, B, P, R, Q, S\},$$
where 
$$ 
\begin{gathered}
\relax
[P,Q] = R,\,  [P,R] = S,
\\
  [A, P] = \frac{1}{2} P, \, [A, Q]=0, \,  [A,R] =\frac{1}{2} R, \, 
  [A, S]=S,\\
  [B,P] = -\frac{1}{2}P, \, [B, Q]=Q, \, [B,R] = \frac{1}{2}R, [B, S]=0.
  \end{gathered}
$$
Let $A^*$, $B^*$, $P^*$, $Q^*$, $R^*$, $S^*$ be the dual basis of $\{A, B, P, R, Q, S\}$ in $\mathfrak{g}^*$.

We first note that $G$ is not unimodular. Indeed, if $\Delta_G$ denotes the modular function of $G$, then 
$\Delta_G(\exp X)=\exp \tr(\ad_X)$ for every $X\in \gg$.
Since $\tr (\ad_A)= 2$,  the group $G$ is not unimodular.

We next show that there exists an irreducible unitary representation of $G$ admitting a symplectic coherent state orbit, but that fails to be square-integrable. For this, let $\ell=B^*+ S^*$ and $\Oc_\ell= \Ad^*(G)\ell$, and denote by $\pi=\pi_\ell\colon G\to \Uc(\Hc)$  a realisation of an irreducible representation corresponding to $\Oc_\ell$.
Then $\gg(\ell) =\RR B +\RR Q$, which is not an ideal of $\gg$, hence $\pi_\ell$ is not square-integrable modulo its projective kernel by Proposition \ref{prop:sq}.

Let $\pg$ be an ideal of $\gg$ contained in $\gg(\ell)$.
Assume towards a contradiction that $\pg \ne \{0\}$.
For $s, t\in \RR$ satisfying $s^2+t^2\ne 0$, it follows that $[sB+tQ, Q]=sQ$ and $[sB+tQ, B]=-tQ$, and hence $Q\in \pg$. 
Since $[P, Q]= R$, it follows that $R\in \pg\subseteq \RR B +\RR Q$. 
This is a contradiction, therefore $\pg=\{0\}$. Since the Lie algebra $\pg_{\pi}$ of $\pker(\pi)$ is an ideal contained in $\mathfrak{g}(\ell)$, it follows in particular that $\pker(\pi)$ is trivial.
Hence $G/\pker{\pi}$ is not unimodular.

We claim that  there is $\eta\in \Hpi^\infty\setminus \{0\}$
such that  $\Ad^*(G) J_\pi ([\eta])$ is open in $\gg^*$, so that $G(J_\pi([\eta]))$ is trivial, hence equal to $\pker(\pi)$.
If our claim is proved, then  the action of $\pi$ defines a symplectic orbit $G\cdot[\eta]$, by Lemma~\ref{symp-cor}.

We now prove our claim. 
The coadjoint orbit $\Oc_\ell$ is open, of dimension $4$, and
given by 
$$
\Oc_\ell = \{ sA^* +(1-\frac{pr}{2})B^* + \ee^{-a/2}rP^* +\frac{p^2}{2}Q^*
+ \ee^{-a/2}(-p)R^* + \ee^{-a}S^*\mid s, p, r, a\in \RR\};
$$
see \cite[(4d-3), p. 259]{inoue2015solvable}.
Then, for every $p, a \in \RR$ ($s=r=0$ above), 
$$
\begin{aligned}
\ell_1&  = B^*+\frac{p^2}{2}Q^*
+ \ee^{-a/2}(-p)R^* + \ee^{-a}S^* \in \Oc_\ell, \\
\ell_2&= B^*+\frac{p^2}{2}Q^*
+ \ee^{-a/2}p R^* + \ee^{-a}S^*\in \Oc_\ell.
\end{aligned}
$$
Hence, 
$$ \frac{1}{2}(\ell_1+\ell_2)= B^* +\frac{p^2}{2}Q^* + \ee^{-a}S^*\in \text{conv} (\Oc_\ell)$$ 
for every $p, a\in \RR$. 
Taking $p=\sqrt 2$, $a=0$, it follows that 
\begin{equation}
f= B^*+Q^*+S^* \in \text{conv} (\Oc_\ell).
\end{equation}

On the other hand, the coadjoint orbit $\widetilde{\Oc}= \Ad^*(G)(Q^*+S^*)$ 
is open, of dimension $6$ and given by
$$
\begin{aligned}
\widetilde{\Oc} = \{ s_1 A^* + r_1 B^* + q_1 P^* + \ee^{-b_1}(1+ \frac{p_1^2}{2})Q^* &
+ \ee^{-(a_1+b_1)/2}(-p_1)R^* + \ee^{-a_1}S^*
\\& \mid s_1, p_1, q_1, r_1, a_1, b_1\in \RR\};
\end{aligned}
$$
see \cite[(6d), p. 259]{inoue2015solvable}.
Taking $p_1=q_1=s_1=a_1=b_1=0$, $r_1=1$, we see that $f \in \widetilde{\Oc}$, and thus
\begin{equation}\label{1001}
f \in 
\widetilde{\Oc}\cap \conv(\Oc_\ell).
\end{equation}
Hence, since
$ I_{\pi}:=\overline{\Ran J_{\pi}} = \overline{\conv(\Oc_{\ell})}$ by the identity \eqref{moment}, it follows that $f\in I_\pi\cap \widetilde{\Oc}$.
The fact that $\widetilde{\Oc}$ is open implies that
$\Ran J_{\pi}\cap \widetilde{\Oc} \ne \emptyset$, so that
there exists $\eta\in \Hc^\infty\setminus 0$ such that $J_{\pi}([\eta]) \in \widetilde{\Oc}$.
Thus  $\widetilde{\Oc} = \Ad^*(G)J_{\pi}([\eta])$, and the coadjoint orbit $\Ad^*(G)J_{\pi}([\eta])$ is open, and this proves our claim.
\end{example}

\section{Application: Perelomov's completeness problem} \label{sec:perelomov}
This section describes an application of Proposition \ref{prop:stabilizer_kernel} to a problem considered in \cite{perelomov1972coherent} regarding the completeness of coherent state subsystems; see  \cite[p. 226]{perelomov1972coherent}. More precisely, we show that necessary conditions for the completeness of coherent state subsystems of exponential Lie groups can be obtained from criteria for the cyclicity of restrictions of associated projective representations established in \cite{romero2022density, enstad2022density, bekka2004square}.

\subsection{Overcomplete coherent states}
Let $\pi$ be an irreducible unitary representation of a Lie group $G$. 
For a  nonzero vector  $\eta \in \Hpi$, let $G_{[\eta]} $ be its  projective stabiliser group. 
Denote by $X = G / G_{[\eta]}$ the associated homogeneous $G$-space and let $s\colon  X \to G$ be a measurable cross-section for the projection $p\colon G \to X$. 
Assume that $X$ admits a $G$-invariant Radon measure $\mu_X$ and that $\eta$ is admissible, in the sense that
\[
 \int_X \vert \scalar{f}{\pi(s(x)) \eta } \vert^2 \; d\mu_X (x) < \infty.
\]
In this situation, following  \cite[Sect. 1.1]{moscovici1978coherent}, the vector $\eta$ is said to define a \emph{$\pi$-system of coherent states based on $X = G/G_{[\eta]}$}. 
Given such a vector $\eta$, there exists unique $d_{\pi, \eta} > 0$ such that
\begin{align} \label{eq:ortho}
 \int_{G/G_{[\eta]}} \vert \scalar{ f}{ \pi (s(x)) \eta }\vert ^2 \; d\mu_{X} (x) = d^{-1}_{\pi, \eta} \| f \|_{\Hpi}^2 \quad \text{for all} \quad f \in \Hpi;
\end{align}
see, e.g., \cite[Thm~1.2]{neeb1997square}. 

In many situations (i.e., when singletons in $X$ are $\mu_X$-null sets), the relation \eqref{eq:ortho} implies that the coherent state system $\{ \pi(s(x)) \eta \}_{x \in G/G_{[\eta]} }$ is overcomplete, in the sense that it remains complete in $\Hpi$ after the removal of an arbitrary element.

\subsection{Coherent state subsystems}
Let $\Gamma$ be a discrete subgroup of $G$ such that the factor space $X/\Gamma$ has finite measure. 
In \cite[p. 226]{perelomov1972coherent}, Perelomov considered the question of providing criteria for the completeness of a subsystem of coherent states
\begin{align} \label{eq:coherent_subsystem}
 \{ \pi (s(\gamma')) \eta : \gamma' \in \Gamma' \}
\end{align}
associated with $\Gamma' := p (\Gamma)$, in terms of the volume of $X/\Gamma$ and the admissibility constant $d_{\pi, \eta}$.

As a combination of Proposition \ref{prop:stabilizer_kernel} and results in \cite{romero2022density, bekka2004square}, the following theorem provides a necessary condition for the completeness of coherent state subsystems of the form \eqref{eq:coherent_subsystem} in the case of an exponential Lie group.

\begin{theorem} \label{thm:perelomov}
Let $G$ be an exponential Lie group and let $(\pi, \Hpi)$ be an irreducible representation of $G$ admitting an admissible vector. Suppose that $\Gamma$ is a discrete subgroup of $G$ such that $\Gamma' := p(\Gamma)$ is a uniform subgroup of $X = G/G_{[\eta]}$.
Let $s \colon X \to G$ be a Borel section. 

 If there exists $\eta \in \Hpi$ such that $\{ \pi (s(\gamma')) \eta : \gamma' \in \Gamma' \}$ is complete in $\Hpi$, then
\[
 \vol(X / \Gamma) d_{\pi, \eta} \leq 1,
\]
where $d_{\pi, \eta} = d_{\pi} > 0$ is the unique constant appearing in \eqref{eq:ortho} and is independent of $\eta \in \Hpi$.

The conclusion is independent of the choice of Borel section and choice of $G$-invariant Radon measure on $X$. 
\end{theorem}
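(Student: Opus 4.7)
The plan is to reduce the problem to a density theorem for cyclic vectors of projective square-integrable representations on the quotient group $H := G/\pker(\pi)$, exploiting Proposition~\ref{prop:stabilizer_kernel} to identify $X$ with $H$.

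First, I will use Proposition~\ref{prop:stabilizer_kernel} to observe that the projective stabiliser $G_{[\eta]}$ equals $\pker(\pi)$ for every nonzero $\eta \in \Hpi$, so that $X = G/G_{[\eta]}$ coincides with $H$ independently of $\eta$. The representation $\pi$ then descends to an irreducible $\sigma$-projective unitary representation $\overline{\pi}\colon H \to \Uc(\Hpi)$, whose Borel $2$-cocycle $\sigma\colon H \times H \to \TT$ is determined by the chosen Borel section $s$ and by the character of $\pi$ on $\pker(\pi)$. The asserted independence of the conclusion on $s$ and on the $G$-invariant Radon measure on $X$ then follows at once: a change of section modifies each coherent vector by a scalar of modulus one, and the invariant Radon measure on the homogeneous space $X = H$ is unique up to positive scaling.

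Next, I would invoke Proposition~\ref{prop:sq} to convert the existence of an admissible vector for $\pi$ into $\pi$ being square-integrable modulo $\pker(\pi)$, which is in turn equivalent to $\overline{\pi}$ being a genuine projective square-integrable representation of $H$. The orthogonality relation \eqref{eq:ortho} then becomes the standard Duflo--Moore identity for $\overline{\pi}$, so that $d_{\pi,\eta}$ agrees (up to the fixed measure normalisation on $H$) with the formal degree $d_\pi$ of $\overline{\pi}$; within the framework of the density theorems of \cite{bekka2004square, enstad2022density, romero2022density}, this formal degree is a single positive scalar independent of $\eta$.

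Finally, since $\Gamma' = p(\Gamma)$ is a uniform subgroup of $H$ by hypothesis, the coherent state subsystem $\{\pi(s(\gamma'))\eta : \gamma' \in \Gamma'\}$ coincides, up to unit-modulus scalars, with the $\Gamma'$-orbit $\{\overline{\pi}(\gamma')\eta : \gamma' \in \Gamma'\}$; its completeness in $\Hpi$ is therefore precisely the cyclicity of $\eta$ for the restriction $\overline{\pi}|_{\Gamma'}$. Applying the necessary density condition for cyclic restrictions of projective square-integrable representations --- established in the unimodular setting in \cite{bekka2004square} and extended to the amenable and solvable exponential frameworks in \cite{enstad2022density, romero2022density} --- then yields $\vol(H/\Gamma')\, d_\pi \le 1$. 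Since the $\Gamma$-action on $X$ factors through $\Gamma'$, one has $\vol(X/\Gamma) = \vol(H/\Gamma')$, which gives the required inequality. The main obstacle will be aligning the cocycle and measure normalisations of \eqref{eq:ortho} with those adopted in the cited density theorems, and verifying that the formal degrees appearing on the two sides match.
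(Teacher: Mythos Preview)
Your proposal follows essentially the same route as the paper: use Proposition~\ref{prop:stabilizer_kernel} to identify $X=G/\pker(\pi)$, pass to the projective representation $\pi':=\pi\circ s$ on $X$, identify $d_{\pi,\eta}$ with the formal degree of $\pi'$, and apply the density theorem of \cite{romero2022density} (with the section- and measure-independence argued exactly as you indicate). One point you gloss over but the paper makes explicit: the existence of the uniform subgroup $\Gamma'$ forces the exponential quotient $X$ to be \emph{unimodular} (see, e.g., \cite[Prop.~B.2.2]{bekka2008kazhdan}); this is what guarantees that the formal degree is a scalar and that the density theorem of \cite{romero2022density} applies directly, so you need not appeal to any putative nonunimodular extension. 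Also, the passage from ``admissible vector exists'' to ``$\pi$ is square-integrable modulo $\pker(\pi)$'' is immediate from the definitions and does not require Proposition~\ref{prop:sq}.
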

\begin{proof}
By Proposition \ref{prop:stabilizer_kernel}, it follows that $G_{[\eta]} = \pker(\pi)$ for any $\eta \in \Hpi \setminus \{0\}$, so that $X = G/\pker(\pi)$ is an exponential Lie group. The existence of a uniform subgroup $\Gamma'$ in $X$ implies that $X$ must be unimodular, see, e.g., \cite[Prop. B.2.2]{bekka2008kazhdan}. In addition, the existence of an admissible vector $\eta$ means that $\pi$ is square-integrable modulo $\pker(\pi)$. As such, the map $\pi' := \pi \circ s$ defines an irreducible unitary projective representation of $X$ that is square-integrable in the strict sense. The constant $d_{\pi, \eta}$ in \eqref{eq:ortho} coincides with the formal degree $d_{\pi'}$ of $\pi'$, in the sense of \cite[Sect. 2.2]{romero2022density}. In particular, $d_{\pi, \eta} = d_{\pi'}$ is independent of $\eta \in \Hpi$.
An application of \cite[Thm. 7.4]{romero2022density} yields that $\vol(X/\Gamma) d_{\pi'} \leq 1$ whenever $\pi'(\Gamma') \eta = \{ \pi (s(\gamma')) \eta : \gamma' \in \Gamma' \}$ is complete. 

For the independence claims, note that if $\sigma : X \to G$ is another choice of Borel section, then the projective representations $\pi' := \pi \circ s$ and $\rho := \pi \circ \sigma$ are ray equivalent (cf. \cite[Prop. 3]{aniello2006square}), in the sense that there exists $\omega : X \to \mathbb{T}$ such that $\pi' (x) = \omega(x) \rho(x)$ for all $x \in X$, so that if $\pi'(\Gamma') \eta$ is complete, then so is $\rho(\Gamma') \eta$, and vice versa. 
Moreover, if $\mu_X$ is a Haar measure on $X$ and $\mu'_X = c \cdot \mu_X$ for some $c > 0$, 
then the volume $\vol'(X/\Gamma')$ and formal degree $d'_{\pi}$ relative to $\mu_X'$ are given by $\vol'(X/\Gamma') = c \cdot \vol(X/\Gamma')$ and $d'_{\pi} = 1/c \cdot d_{\pi}$, so that the product $\vol (X/\Gamma') d_{\pi} = \vol'(X/\Gamma') d'_{\pi}$.
\end{proof}

Theorem \ref{thm:perelomov} provides an extension of \cite[Thm. 1.2]{velthoven2022completeness} from nilpotent Lie groups to general unimodular exponential Lie groups. In addition, Theorem \ref{thm:perelomov} is valid for an arbitrary admissible vector $\eta \in \Hpi$, whereas \cite[Thm. 1.2]{velthoven2022completeness} required the orbit $G \cdot [\eta]$ to be symplectic, which in particular implies $\eta  \in \Hpi^{\infty}$.

Lastly, it is of interest to compare Theorem \ref{thm:perelomov} to density conditions for coherent state subsystems of semisimple Lie groups. In the latter setting, the general density conditions for restricted representations \cite{bekka2004square, enstad2022density, romero2022density} can be improved to depend on the projective stabiliser of an admissible vector, see, e.g., \cite[Theorem 4.5]{caspers2022density}. On the other hand, in the setting of exponential Lie groups,  
the projective stabilisers are always independent of the choice of vector, by Proposition \ref{prop:stabilizer_kernel}.

\section*{Acknowledgement} 
 The first named author has been supported by Research Grant GAR 2023 (cod 114), supported from the Donors' Recurrent Fund of the Romanian Academy, managed by the "PATRIMONIU" Foundation.
 
 For the second named author, this research was funded in whole or in part by the Austrian Science Fund (FWF): 10.55776/J4555.

The authors are grateful to D. Arnal for helpful discussions on various aspects of the paper and for providing Example \ref{ex:stabilizer}.
Also, the authors thank the referee for their comments that helped improving the manuscript.

\end{document}